\newtheorem{theorem}{Theorem}
\newtheorem{proposition}[theorem]{Proposition}
\newtheorem{lemma}[theorem]{Lemma}
\newtheorem{corollary}[theorem]{Corollary}
\newtheorem{conjecture}[theorem]{Conjecture}
\theoremstyle{definition}
\newtheorem{definition}[theorem]{Definition}
\newtheorem{example}[theorem]{Example}
\newtheorem{remark}[theorem]{Remark}
\newcommand{\head}[1]{{\mathcal{H}(#1)}}
\newcommand{\tail}[1]{{\mathcal{T}(#1)}}
\newcommand{\setC}{\mathbb{C}}
\newcommand{\wvec}{\mathbf{w}}
\newcommand{\vvec}{\mathbf{v}}
\newcommand{\Vol}{\mathrm{Vol}}
\newcommand{\symS}{S}
\newcommand{\BST}{\mathrm{BST}}
\newcommand{\BSD}{\mathrm{BSD}}
\newcommand{\BSP}{\mathrm{BSP}}
\DeclareMathOperator{\length}{\ell}
\DeclareMathOperator{\inv}{inv}
\DeclareMathOperator{\hor}{hor}
\DeclareMathOperator{\DES}{DES}
\DeclareMathOperator{\des}{des}
\title[Enumeration of border-strip decompositions]{Enumeration of border-strip decompositions \& Weil--Petersson volumes}
\author{Per Alexandersson}
\address{Dept. of Mathematics, Royal Institute of Technology, SE-100 44 Stockholm, Sweden}
\email{per.w.alexandersson@gmail.com}
\author{Linus Jordan}
\address{Dept. of Mathematics, Royal Institute of Technology, SE-100 44 Stockholm, Sweden}
\email{linus.jordan@bluewin.ch}
\keywords{Border-strip tableaux, border-strip decompositions, permutations, $q$-analogue, Weil--Petersson volume}
\begin{document}
\begin{abstract}
We describe an injection from border-strip decompositions of certain shapes to permutations.
This allows us to provide enumeration results, as well as $q$-analogues of enumeration formulas.

Finally, we use this injection to prove a connection between the number of 
border-strip decompositions of the $n\times 2n$ rectangle and the Weil--Petersson volume
of the moduli space of an $n$-punctured Riemann sphere.
\end{abstract}

\maketitle


\tableofcontents

\section{Introduction}

Border-strip \emph{tableaux} have a rich history, originating with the 
celebrated Murnaghan--Nakayama rule, \cite{Murnaghan1937,Nakayama1940}, 
which provides a combinatorial formula 
for computing character values of $\symS_n$.
It is a signed sum over border-strip tableaux, but the sign only depends on
the border-strip \emph{decomposition}, \emph{i.e.,} the ``unlabeled version''
of the tableaux. This gives a motivation to enumerate border-strip decompositions.

We note that there is a hook-formula for enumerating border-strip tableaux,
see \cite{Fomin1997}, but less study has been devoted to 
enumerating border-strip decompositions. 
Even determining if a region can be tiled by $n$-ribbons is non-trivial,
see \cite{Pak2000Ribbon}.

\medskip 

We introduce a family of shapes (called \emph{simple diagrams}) which have 
nice properties with respect to enumeration.
These are parametrized by a binary word, and the size of the ribbons which are used to tile the region.
In particular, we show that certain normalized enumerations grow as a polynomial in $n$ (the size of the ribbons)
thus reducing specific enumerations to a finite computation.

\subsection{Overview of results}

We show that border-strip tableaux and border-strip decompositions of simple diagrams
are in bijection with certain classes of permutations,
see \cref{Injection_to_S_m} and \cref{cor:SimpleBSDCharacterization}.
This allows us to study a certain $q$-analogue of 
border-strip decompositions, which generalize the classical inversion-statistic on permutations.
For example, in \cref{cor:totalNumberOfBorderStripDecomps},
we give the formula
\[
 \sum_{\wvec \in  \{r,c\}^k} \sum_{T \in \BSD(\wvec,n)} q^{\inv T} = [n+1]^k_q [n]_q!
\]
where the first sum is over all binary words of length $k$ (defining a simple diagram),
and $\BSD(\wvec,n)$ is the set of border-strip decompositions with strips of size $n$,
and shape determined by $(\wvec,n)$.
\medskip
In \cref{Polynomiality}, we give an efficient way to compute the number of 
border-strip decompositions of simple diagrams, as a function of $n$ --- the strip size.
This allows us to prove an inequality, showing that ``straighter'' 
simple shapes admit a larger number of border-strip decompositions, see \cref{thm:straighterInequality}
The maximum is attained for rectangles.
In contrast, by \cref{cor:countingBST} we know that these shapes 
admit the same number of \emph{border-strip tableaux} whenever $n\geq k$.
\medskip 

Finally, we give a new interpretation of \cite[\texttt{A115047}]{OEIS} in the OEIS.
We show that these numbers count the number of ways to tile a $2n \times n$-rectangle with strips of size $n$,
which gives a new simple combinatorial interpretation of certain Weil--Petersson volumes.
We cannot give an intuitive explanation for this curious 
connection, and it invites for further research.

\section{Preliminaries}

We first need to recall some general definitions ---
for a thorough background, see \cite{StanleyEC2}.

A \emph{tableau} of shape $\lambda$ and type $\mu$ is a filling of the Young diagram $\lambda$,
such that there are exactly $\mu_i$ boxes filled with $i$, for $i=1,\dotsc,\length(\mu)$.
A \emph{border-strip} (or simply \emph{strip}) of a diagram is a subset of boxes that form a connected skew shape,
and contains no $2\times2$ subdiagram.
A \emph{border-strip tableau}\footnote{Also known as rim-hook tableau} 
is a tableau such that rows and columns are weakly increasing,
and for all $i$, the boxes filled with the number $i$, form a border-strip.
We let $\BST(\lambda,\mu)$ denote the set of border-strip tableaux of shape $\lambda$ and type $\mu$.

A \emph{border-strip decomposition} of shape $\lambda$ and type $\mu$
is a partition of $\lambda$ into border-strips where the border-strip sizes are determined by the $\mu_i$,
and the set of such decompositions is denoted $\BSD(\lambda,\mu)$.
Hence, each border-strip tableau defines a border-strip decomposition.
Finally, the definition of $\BSD(\lambda,\mu)$ extends in the natural manner the case when $\lambda$ is a skew shape.

\begin{example}
The following tableau $T$ is an element in $\BST(\lambda,\mu)$
with $\lambda=(5,5,4,3,3,3)$ and $\mu=(5,4,3,4,3,2,2)$.
To the right, we show the corresponding border-strip decomposition with the strips indicated by the colors.
\begin{align}\label{eq:borderTableauExample}
\ytableausetup{centertableaux,boxsize=1.0em}
\begin{ytableau}
1 & 1 & 1 & 1 & 4 \\
1 & 2 & 4 & 4 & 4 \\
2 & 2 & 5 & 5 \\
2 & 3 & 5 \\
3 & 3 & 7 \\
6 & 6 & 7
\end{ytableau}
\qquad  
\begin{ytableau}
*(blue) & *(blue) & *(blue) & *(blue)  & *(red)  \\
*(blue)   & *(gray)  & *(red)   & *(red) & *(red) \\
*(gray)  & *(gray)  & *(yellow) & *(yellow)  \\
*(gray) & *(green) & *(yellow) \\
*(green)  & *(green) & *(purple)  \\
*(brown) & *(brown)  & *(purple) 
\end{ytableau}
\end{align}
\end{example}
It is clear that the number of elements in $\BST(\lambda,\mu)$ depend on the order of the entries in $\mu$,
but this is not the case for $\BSD(\lambda,\mu)$.
In particular, $\BST(\lambda,\mu)$ might be empty, while $\BSD(\lambda,\mu)$ is not.

\medskip 

Recall that the \emph{content}, $c(\square)$, of a box is defined as the difference $j-i$
of column-index minus row-index of the box.
From the definition of border-strips, it is straightforward to show that
the boxes in a border-strip $B$ all have different content, and these numbers form the \emph{content-interval} $a,a+1,\dotsc,b$ with no gaps.
We can thus define the \emph{head}, $\head{B}$ of a border-strip is the box with maximal content,
and its \emph{tail}, $\tail{B}$, which is the box with minimal content. 
In \eqref{eq:headTail}, the head and tail boxes have been marked.
\begin{align}\label{eq:headTail}
\begin{ytableau}
\none\, & \none & \none & \none & H \\
\none & \none & \, & \, & \, \\
\none & \none & \,  \\
T & \, & \, \\
\end{ytableau}
\end{align}

\section{Enumeration of border-strip decompositions}

In this section, we introduce a natural family of diagram shapes 
which have particularly nice properties. 

We first describe a bijection from border-strip decompositions of such shapes
to certain permutations. Using this bijection,
we are able to give several $q$-refinements of enumerations of border-strip decompositions.
In particular, this includes the classical $q$-analogue of permutations in $\symS_n$ given 
by Mahonian statistics.

\begin{definition}
A \emph{simple diagram} is parametrized by two parameters, a word $\wvec$ with entries in $\{r,c\}$,
and a natural number $n$. 

The family of simple diagrams are constructed recursively as follows:
\begin{itemize}
 \item If $\wvec = \emptyset$, then $(\wvec,n)$ is the $n\times n$-square.
 \item The diagram $(c\wvec,n)$ is obtained from $(\wvec,n)$ by adding 
 an additional column of size $n$ on the left, such that the bottom-most 
 square of the new column is in the bottommost row of $(\wvec,n)$.
 
  \item The diagram $(r\wvec,n)$ is obtained from $(\wvec,n)$ by adding 
 an additional row of size $n$ on the bottom, such that the left-most 
 square of the new row is in the leftmost column of $(\wvec,n)$.
\end{itemize}
\end{definition}

We let $\BSD(\wvec,n)$ denote the set of border-strip decompositions of $(\wvec, n)$,
and $\BST(\wvec,n)$ denotes the set of border-strip tableaux of $(\wvec, n)$,
in both cases with strips of size $n$.

For a word $\wvec$, we define $C_{\wvec}$ the total number of $c$'s in $\wvec$, $R_{\wvec}$ the total 
number of $r$'s in $\wvec$.
Furthermore, let $\hor(\wvec) \coloneqq C_{\wvec}-R_{\wvec}$. 
Intuitively, $\hor(\wvec)$ measures how ``horizontal'' the diagram is.

\begin{example}
The simple diagram determined by $(rcrcc,2)$ is the following shape:
\begin{align}
\ytableausetup{centertableaux,boxsize=1.0em}
\begin{ytableau}
\none & & & &\\
& & & &\\
&  & \\
& 
\end{ytableau}
\qquad  
\end{align} 
Below we can see how $(rcrcc,2)$ is constructed from the $2\times 2$ 
square by adding successively the blue, red, green, yellow and gray boxes to a $2\times2$ square.
\begin{align}
\ytableausetup{centertableaux,boxsize=1.0em}
\begin{ytableau}
\none &*(red) c&*(blue) c& &\\
*(yellow) c&*(red) c &*(blue) c& &\\
*(yellow) c&*(green) r & *(green) r\\
*(gray) r& *(gray) r
\end{ytableau}
\qquad  
\end{align}
We have $C_{rcrcc}=3$, $R_{rcrcc}=1$ and $\hor(rcrcc)=3-2=1$.
\end{example}

\begin{definition}\label{def:comparable}
In a fixed border-strip decomposition, a border-strip $B_a$ is \emph{above} a border-strip $B_b$ if there is 
a path from $B_a$ to $B_b$ going only down or right. In this case $B_b$ is \emph{below} $B_a$.
\end{definition}
\begin{definition}
A border-strip $B_a$ is \emph{inner} to a border-strip $B_b$ if there exists a sequence $B_a=B_1,B_2,\dotsc,B_k=B_b$ such that for all $i$ $B_i$ is above $B_{i+1}$. This means the relation \emph{inner} is the transitive closure of the relation \emph{above}.

If $B_a$ is inner to $B_b$, $B_b$ is \emph{outer} to $B_a$.

Two border strips $B_a$ and $B_b$ are \emph{comparable}, if $B_a$ is inner or outer to $B_b$.
\end{definition}

\begin{remark}
If $B_1$ is above $B_2$, it implies $B_1$ must contain a 
smaller number than $B_2$ in any border-strip tableau, 
thus the existence of a BST for any BSD implies the transitive closure is well-defined.

Also, $B_1$ is inner to $B_2$ if and only if it contains a smaller 
number in every BST with the border-strip decomposition.
We do not use this property, but it follows from the proof of \cref{CountingBSD} below.
\end{remark}

\begin{example}
Here is an example in $\BSD(ccrcc,3)$:
\begin{align}
\ytableausetup{centertableaux,boxsize=1.0em}
\begin{ytableau}
\none & \none &*(gray) & *(gray)& *(blue) & *(blue) & *(blue)\\
*(green)& *(green)&*(gray) & *(red) & *(red) & *(red) &*(brown)\\
*(green) & *(yellow)& *(yellow) & *(yellow) & *(black)&*(brown)&*(brown)\\
*(orange)&*(orange)&*(orange)&*(black)&*(black)
\end{ytableau}
\qquad  
\end{align}
In this case the blue strip is above the red strip, and the red strip is above the yellow strip, which means the blue strip is inner to the yellow strip, and the blue and yellow strip are comparable. But the blue strip is neither above nor below the yellow strip.
\end{example}

\begin{definition}
Two border strips $B_1$ and $B_2$ in a decomposition 
form an \emph{inversion} if the following three conditions are fulfilled:
\begin{itemize}
 \item The content-sequences of $B_1$ and $B_2$ have a non-empty intersection,
 \item $B_1$ is inner to $B_2$, and
 \item $\head{B_1}>\head{B_2}$.
\end{itemize}
\end{definition}
We prove in \cref{cor:MahonianEnumeration} that this definition 
generalizes the notion of inversions in $\symS_n$ in a natural manner.

\begin{definition}
For a word $\wvec$ of length $k$, we number the diagonals of the simple diagram $(\wvec, n)$ from $n+k$ to $1$, 
starting in the top right corner, as shown in the example below for $(crrc,3)$:
\begin{align}
\ytableausetup{centertableaux,boxsize=1.0em}
\begin{ytableau}
\none 3&4 &5 &6 &7\\
\none2 1& 3& 4&5 &6\\
1& 2& 3&4 &5\\
&1 & 2&3\\
& &1 &2
\end{ytableau}
\qquad  
\end{align}
\end{definition}

\begin{lemma}
Let $\wvec$ be a word of length $k$. Then for any decomposition in $\BSD(\wvec,n)$, there is a unique head in 
each diagonal from 1 to $n+k$, and the position of 
the heads uniquely determines the border-strip decomposition.
\end{lemma}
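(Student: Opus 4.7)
My plan is to establish, in sequence, (a) that each of the diagonals $1,\ldots,n+k$ contains exactly one strip-head, and (b) that the list of head positions determines the border-strip decomposition. Since a simple diagram $(\wvec,n)$ has $n(n+k)$ cells, any $T\in\BSD(\wvec,n)$ has exactly $n+k$ strips, matching the number of head-diagonals; by pigeonhole, for (a) it suffices to show that no two strips share a head-diagonal.

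For (a), let $m_d$ denote the number of cells of content $d$ in $(\wvec,n)$ and $h_d$ the number of strip-heads at content $d$ in $T$. Every strip contributes exactly one cell to each diagonal in its content-interval $[d-n+1,\,d]$, where $d$ is its head-content, so counting cells by diagonal gives
\[
\sum_{j=0}^{n-1} h_{d+j} \;=\; m_d \qquad \text{for every } d \in \mathbb{Z}.
\]
A short induction on $k=|\wvec|$, tracing how each $c$- or $r$-insertion modifies $(m_d)$, shows that in any simple diagram the sequence $(m_d)$ is palindromic and unimodal of shape $1,2,\ldots,n-1,\underbrace{n,\ldots,n}_{k+1\text{ copies}},n-1,\ldots,2,1$. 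Combined with the boundary condition $h_d=0$ outside the head-diagonal range, the recurrence can be solved uniquely from the smallest content upward to yield $h_d=1$ for every head-diagonal.

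For (b), I would argue by induction on $k$. The strip $B^{\ast}$ containing the unique minimum-content cell has its tail forced to be that corner cell and its head at the prescribed head of diagonal $1$. Although several ribbon shapes from tail to head a priori fit inside $(\wvec,n)$, only one is compatible with the remaining prescribed heads: any alternative would leave a residual shape in which some later head cannot be realised as the head of a size-$n$ ribbon. Removing $B^{\ast}$ reduces the question to a strictly smaller instance with the remaining heads, to which an appropriate inductive hypothesis applies. The main obstacle is exactly this forced-choice argument in (b): the rigorous verification requires a careful local analysis showing how each alternative ribbon path from the tail blocks the placement of later strips, exploiting the particular staircase geometry of simple diagrams, and formulating the inductive class of shapes broadly enough that the residual shape after removing $B^{\ast}$ still falls within it.
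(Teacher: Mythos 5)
Your part~(a) is correct and takes a genuinely different route from the paper: the convolution identity $\sum_{j=0}^{n-1}h_{d+j}=m_d$ (valid for every $d$ because a size-$n$ strip meets $n$ consecutive diagonals in one cell each), together with the diagonal-size profile $1,2,\dotsc,n-1,n,\dotsc,n,n-1,\dotsc,2,1$, does force $h_d=1$ on the top $n+k$ diagonals and $h_d=0$ elsewhere. One small caution: the system should be solved starting from the boundary condition $h_d=0$ for contents \emph{outside the diagram} (either from the largest content downward or from below the diagram upward); taking ``$h_d=0$ outside the head-diagonal range'' as given is circular, since the absence of heads below diagonal $1$ is part of what is being proved. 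The paper instead gets (a) as a by-product of a single sweep over the diagonals, so your counting argument is a legitimately cleaner alternative for this half.

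Part~(b), however, contains a genuine gap: you name the two steps that carry all the content of the claim --- that the prescribed heads force the path of the corner strip $B^{\ast}$, and that the residual shape falls into a class closed under the induction --- and then explicitly defer both. Neither is routine as set up. The complement of $B^{\ast}$ in $(\wvec,n)$ is in general not a simple diagram, so an induction on $k$ does not close without first defining and analysing a larger family of shapes; and the assertion that ``any alternative would leave a residual shape in which some later head cannot be realised'' is the uniqueness statement in disguise, not an argument for it. The paper avoids all of this by sweeping the diagonals from the top-right corner inward: every strip already started has fewer than $n$ cells and must therefore occupy exactly one cell of the next diagonal, and since strips cannot cross, placing the single new head in a diagonal forces the extension of all started strips simultaneously (with a separate count showing exactly one new head appears in each of diagonals $1,\dotsc,n+k$ and none below). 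That local, one-diagonal-at-a-time argument is precisely what your sketch is missing; if you adopt it, the corner-strip induction becomes unnecessary.
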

\begin{proof}
We will show that the position of the heads uniquely determines the decomposition,
by processing the diagonals one by one and iteratively prolonging the strips, 
starting from diagonal $n+k$.

The only way to cover the single box in diagonal $n+k$ is for it to be a head.

For diagonal $i$ with $k<i<n+k$ we have one box more in diagonal $i$ than in 
diagonal $i+1$, and all strips we already started have less than $n$ squares, and must continue, 
therefore there is exactly one head in diagonal $i$. 
Furthermore, the position of the head $H$ in diagonal $i$ determines the continuation of 
the strips started, as shown in this figure:
\begin{align}
\begin{ytableau}
 *(blue) a\\
\none & *(yellow) b\\
\none & *(green) H & *(red) c\\
\none & \none & \none &*(orange) d\\
\none & \none & \none & \none
\end{ytableau}
\qquad
\substack{ \\ \longrightarrow  }
\qquad
\begin{ytableau}
*(blue) & *(blue) a\\
\none  & *(yellow)  & *(yellow) b\\
\none & \none & *(green) H & *(red) c\\
\none & \none & \none & *(red)&*(orange) d\\
\none & \none & \none & \none  &*(orange)
\end{ytableau}
\end{align}\\
For $i\leq k$, there is exactly one strip ending in diagonal $i+1$, and 
diagonals $i$ and $i+1$ have the same size, therefore there must be exactly 
one head in diagonal $i$. Once we placed the head, there are $n-1$ boxes left 
in diagonal $i$, and $n-1$ strips must have a box in diagonal $i$. 
As strips cannot cross each other, this gives at most one solution.

Similarly, for the diagonals below diagonal 1, the size of the diagonals 
decreases by 1 each step, and the number of strips too, so there cannot be any heads 
below diagonal 1, and there is a unique way to extend the border-strip decomposition.
\end{proof}

\begin{definition}
Given a border-strip decomposition of a simple diagram,
the unique strip with head in diagonal $i$ is referred to as \emph{strip $i$}.
\end{definition}

\begin{proposition}\label{comparable}
Let $(\wvec, n)$ be a simple diagram. 
Then for any decomposition in $\BSD(\wvec,n)$, if $|i-j|\leq n$, then strip $i$ and $j$ are comparable.
\end{proposition}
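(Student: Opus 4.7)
The plan is to split on whether strips $i$ and $j$ share a diagonal, and in each case reduce the question to a simple geometric fact about simple diagrams.

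The key structural input is that each simple diagram $(\wvec, n)$ is \emph{rectangle-convex}: denoting by $[a_r, b_r]$ the column-interval of row $r$, both sequences $\{a_r\}$ and $\{b_r\}$ are non-increasing in $r$. Consequently, whenever $(r_1,c_1)$ and $(r_2, c_2)$ lie in the diagram with $r_1 \le r_2$ and $c_1 \le c_2$, the entire rectangle $[r_1,r_2]\times[c_1,c_2]$ does too, so there is a path from the first to the second consisting only of down and right steps. The monotonicity of $a_r$ and $b_r$ is an immediate induction on $|\wvec|$: it holds for the $n\times n$ square, and the two recursion steps -- adding a column of size $n$ on the left with its bottom box in the bottom row, or adding a row of size $n$ on the bottom with its leftmost box in the leftmost column -- preserve both inequalities (the width bound $b_r-a_r+1\ge n$ needed to verify the $r$-step is maintained in the same induction).

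Now assume $i>j$. When $|i-j|\le n-1$ the content-intervals $[i-n+1,i]$ and $[j-n+1,j]$ overlap; pick a shared diagonal $d$ and let $(r_i, c_i)$ and $(r_j, c_j)$ be the boxes of strips $i$ and $j$ on $d$. They lie on the same diagonal but in different rows, so we may take $r_i<r_j$, forcing $c_i<c_j$; rectangle-convexity gives a down-right path from $(r_i,c_i)$ to $(r_j, c_j)$, so strip $i$ is above strip $j$. When $i-j=n$ the strips have no common diagonal; instead compare the tail $(r_i, c_i)$ of strip $i$ (on diagonal $j+1$) with the head $(r_j, c_j)$ of strip $j$ (on diagonal $j$). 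The content identity reads $c_i-c_j = (r_i-r_j)+1$, which splits into two sub-cases: if $r_i\ge r_j$ then $c_i > c_j$, so the head of strip $j$ is weakly above and strictly to the left of the tail of strip $i$, and a down-right path places strip $j$ above strip $i$; if $r_i<r_j$ then $c_i\le c_j$ and the symmetric argument places strip $i$ above strip $j$. Either way, strips $i$ and $j$ are comparable.

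The main obstacle is the bookkeeping for rectangle-convexity through the recursive construction of $(\wvec, n)$; once that is in hand, both cases follow from an elementary coordinate computation. No issues arise from strips whose tails sit below diagonal $1$, since only the location of the boxes (not the diagonal-labelling) matters for the path argument.
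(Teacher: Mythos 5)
Your proof is correct and follows essentially the same route as the paper's: both reduce the claim to the fact that two boxes lying in the same or in adjacent diagonals of a simple diagram are joined by a down-right path inside the diagram (the paper phrases this as "two consecutive diagonals can be covered by a path going only right and down," comparing the tail of strip $i$ with the head of strip $j$). Your write-up merely makes explicit the convexity of simple diagrams and the coordinate bookkeeping that the paper's two-sentence proof leaves implicit.
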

\begin{proof}
Without loss of generality, $i>j$. Then the tail of $i$ is at most one diagonal 
higher than the head of $j$. As we can cover two consecutive diagonals with a 
path going only right and down, two elements that are at most one diagonal apart are comparable.
\end{proof}

\medskip 

We noticed that the positions of the heads of the strips uniquely determine the
border-strip decomposition. The next definition and proposition
encodes the placements of the heads as a permutation with certain restrictions,
giving an alternative description of border-strip tableaux of simple shapes.

Further down, we add more restrictions, so that the resulting set of permutations 
are in bijection with border-strip decompositions.

\begin{definition}
We define $\psi:\BST(\wvec,n)\rightarrow \symS_{n+k}$ by $\psi(T)=\sigma$ 
such that if the unique head in diagonal $i$ is numbered $j$ then $\sigma(j)=i$.

We let $\BSP(\wvec,n) \subseteq \symS_{n+k}$ denote the image of $\BST(\wvec,n)$ under $\psi$.
\end{definition}

\begin{proposition}\label{injectivity}
The map $\psi$ is injective.
\end{proposition}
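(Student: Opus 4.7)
The plan is to show that $T$ is uniquely recovered from $\sigma := \psi(T)$ by a diagonal-by-diagonal reconstruction, going from the top-right corner (diagonal $n+k$) down to diagonal $1$, paralleling the preceding lemma.

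Assume inductively that the labels of all cells on diagonals $>i$ have been determined, and consider diagonal $i$, whose unique head $H$ carries the known label $\ell_i := \sigma^{-1}(i)$. By the preceding lemma, choosing the row of $H$ on diagonal $i$ forces the continuations of all active strips: those whose cells on diagonal $i+1$ sit above $H$ extend one cell to the left and keep their row, while those below extend one cell down. Hence it only remains to determine the row of $H$.

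For this, I use the fact that along any single diagonal of a BST the labels are strictly increasing from top to bottom. Two cells on the same diagonal are comparable in the north-west/south-east order, so row and column monotonicity gives weak increase; strictness follows because the cells of any one strip have pairwise distinct contents, hence no two cells of the same strip lie on the same diagonal. From the inductive hypothesis the top-to-bottom labels on diagonal $i+1$ form a known strictly increasing sequence $L_1 < L_2 < \dots < L_m$. Placing $H$ at row $r_H$ partitions the strips active on diagonal $i+1$ into those with row $<r_H$ (above $H$, extending left, same row on diagonal $i$) and those with row $\geq r_H$ (below $H$, extending down, shifted by one row), so the top-to-bottom label sequence on diagonal $i$ reads
\[
 L_1,\, \dots,\, L_j,\, \ell_i,\, L_{j+1},\, \dots,\, L_m
\]
for some split index $j$. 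Requiring this sequence to be strictly increasing pins down $j$ as the unique index with $L_j < \ell_i < L_{j+1}$ (with the conventions $L_0 = -\infty$, $L_{m+1} = +\infty$), which in turn pins down the row of $H$. The induction step is complete, and $T$ is determined by $\sigma$.

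The main obstacle is the careful treatment of the boundary regimes: the topmost few diagonals (where a new strip is born with essentially no predecessors to interleave with) and the diagonals $i \leq k$ where the length of a diagonal does not strictly increase as $i$ decreases by one and one active strip terminates rather than extending. In those regimes the ``insertion slot'' argument still applies, but the underlying bookkeeping of how many strips are active on each diagonal and which cells are available has to be taken directly from the case analysis in the preceding lemma's proof.
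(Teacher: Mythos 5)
Your proof is correct and is essentially the paper's argument, expanded: the paper also observes that $\sigma$ determines which labels occupy each diagonal and that the strict increase of labels along a diagonal (top to bottom) forces a unique arrangement, hence a unique tableau. Your diagonal-by-diagonal insertion induction is just a more explicit rendering of that one-line reconstruction, with the boundary cases already handled by the preceding lemma.
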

\begin{proof}
A permutation defines the value of the heads in each diagonal, 
and thus the value of all the boxes in each diagonal. 
As they have to be in increasing order to form a border-strip tableau, 
there is a unique way to do this. 

Note that not every permutation give rise to a valid border-strip tableau,
see \cref{Injection_to_S_m} below.
\end{proof}
\begin{example}
Here is an example $T \in$ $\BST(ccc,n)$:
\begin{align}
T=
\ytableausetup{centertableaux,boxsize=1.0em}
\begin{ytableau}
1 & 1 & \mathbf{1} & 3 & \mathbf{3} & \mathbf{4} \\
2 & 2 & \mathbf{2} & 3 & 4 & 4 \\
5 & 5 & \mathbf{5} & 6 & 6 & \mathbf{6}
\end{ytableau}
\qquad  
\text{ with }
\qquad
\psi(T) = [3,2,5,6,1,4].
\end{align}
The strip labeled $1$ has its head in diagonal $3$, thus $\psi(T)(1)=3$, the strip 
labeled $2$ has its head in diagonal $2$, thus $\psi(T)(2)=2$ and so on.
\end{example}

\begin{proposition}\label{Injection_to_S_m}
Let $\wvec = (w_1,\dotsc,w_k)$ be a word of length $k$. 
A permutation $\sigma \in \symS_{n+k}$ is in $\BSP(\wvec,n)$ 
if and only if for all $i$ with $1\leq i\leq k$ 
we have:
\begin{itemize}
 \item 
$\sigma^{-1}(i) <\sigma^{-1}(n+i)$ whenever $w_i=c$, and
 \item 
$\sigma^{-1}(i) >\sigma^{-1}(n+i) $ whenever $w_i=r$.
\end{itemize}
\end{proposition}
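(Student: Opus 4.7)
The plan is to characterize $\BSP(\wvec,n)$ via the comparability of pairs of strips. Fix $i\in\{1,\dotsc,k\}$ and, in an arbitrary border-strip decomposition, let $S_i$ and $S_{n+i}$ denote the strips whose heads lie in diagonals $i$ and $n+i$ respectively. Since $|i-(n+i)|=n$, \cref{comparable} guarantees that $S_i$ and $S_{n+i}$ are always comparable, so exactly one is inner to the other. Their content ranges, $\{i-n+1,\dotsc,i\}$ for $S_i$ and $\{i+1,\dotsc,n+i\}$ for $S_{n+i}$, meet along the interface between diagonals $i$ and $i+1$.

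The crux is to show that the letter $w_i$ dictates which strip is inner, independently of the particular BSD: $w_i=c$ forces $S_i$ to be inner to $S_{n+i}$, while $w_i=r$ forces the reverse. I will prove this by unwinding the recursive construction of $(\wvec,n)$ and tracking how each column or row addition changes the interface between consecutive diagonals. The key local fact to extract is that in $(\wvec,n)$ the diagonal labeled $i+1$ contains a distinguished box whose down-neighbor lies outside the shape when $w_i=c$, and a distinguished box whose left-neighbor lies outside the shape when $w_i=r$. Although the labels shift whenever a later row addition is performed, careful bookkeeping shows that the $i$-th letter of $\wvec$ is precisely the one governing the transition from diagonal $i$ to diagonal $i+1$ in the final labeling. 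From this distinguished box one reads off the inner/outer direction: for $w_i=c$ the tail of $S_{n+i}$ sits up-right of the head of $S_i$, yielding a down-right path from $S_i$ into $S_{n+i}$, hence $S_i$ is inner; for $w_i=r$ the situation is reversed.

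The forward implication now follows: if $T\in\BST(\wvec,n)$ and $\sigma=\psi(T)$, then since the inner strip carries the smaller BST label, the conditions $\sigma^{-1}(i)<\sigma^{-1}(n+i)$ when $w_i=c$ and $\sigma^{-1}(i)>\sigma^{-1}(n+i)$ when $w_i=r$ are exactly the translations of these comparabilities into statements about $\sigma$.

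For the converse, I will start from $\sigma$ and reconstruct a BST. By the preceding lemma, a BSD is pinned down once the head box in each diagonal is specified. Row and column monotonicity of a BST forces labels in a diagonal to increase from top to bottom, so in diagonal $d$ the head (carrying label $\sigma^{-1}(d)$) must be placed at the position corresponding to the rank of $\sigma^{-1}(d)$ among $\{\sigma^{-1}(d),\dotsc,\sigma^{-1}(\min(d+n-1,n+k))\}$. The hypothesized conditions on $\sigma$ are exactly what is needed for this placement to be consistent with the forced inner/outer structure at every interface, so that the resulting strips close up into a valid border-strip decomposition inducing $\sigma$. The main obstacle throughout is the geometric claim about the interface between diagonals $i$ and $i+1$; once that is established, the translation into the conditions on $\sigma$ is direct.
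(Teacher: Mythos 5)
Your proposal is correct and follows essentially the same route as the paper: both arguments rest on the local geometric fact that the letter $w_i$ determines the offset between diagonals $i$ and $i+1$, which forces strip $i$ to lie above (resp.\ below) strip $n+i$ when $w_i=c$ (resp.\ $w_i=r$), and hence forces the sign of $\sigma^{-1}(i)-\sigma^{-1}(n+i)$. The paper packages this as a single diagonal-by-diagonal construction that yields both implications at once, whereas you phrase the necessity via the inner/outer relation and the sufficiency via the rank-based placement of heads along each diagonal, but the substance is the same.
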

\begin{proof} We construct the tableau from the last diagonal to the first one. 
For any $i$, the unique head in diagonal $i$ must be filled with 
number $\sigma^{-1}(i)$. If $k<i\leq n+k$, diagonal $i$ has one element 
more than diagonal $i+1$, and it is always possible to extend a BSD. 
If $1\leq i \leq k$, we have to look at $w_i$.
If $w_i=c$, diagonals $i$ and $i+1$ are as follows:
\begin{align}
\ytableausetup{centertableaux,boxsize=1.0em}
\begin{ytableau}
 \empty& \\
 \none & &\\
 \none & \none & &\\
 \none & \none & \none & &
\end{ytableau}
\qquad  
\end{align}
We observe the new strip must be added above the strip starting in 
diagonal $n+i$ (ending in diagonal $i+1$), 
which means it has to be a smaller number, \emph{i.e.} $\sigma^{-1}(i)<\sigma^{-1}(n+i)$.
If $w_i=r$, diagonals $i$ and $i+1$ must be as follows:
\begin{align}
\ytableausetup{centertableaux,boxsize=1.0em}
\begin{ytableau}
 \empty\\
  &\\
  \none & &\\
  \none & \none & &\\
  \none & \none & \none &
\end{ytableau}
\qquad  
\end{align}
and the new strip must be below strip $n+i$, 
and it has to be filled with a larger number, \emph{i.e.} $\sigma^{-1}(i)>\sigma^{-1}(n+i)$.
\end{proof}

\begin{corollary}\label{cor:countingBST}
For a word $\wvec$ of length $k$, with $k\leq n$, we have 
\[
|\BST(\wvec,n)|=(n+k)!/2^k
\]
This means the number of border-strip tableaux only depends on the length of the word for $n\geq k$.
In contrast, this count is word dependent for $n<k$.
\end{corollary}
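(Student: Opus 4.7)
The plan is to derive the count directly from the characterization of $\BSP(\wvec,n)$ given in \cref{Injection_to_S_m}, combined with the injectivity of $\psi$ from \cref{injectivity}. Since $\psi$ is an injection from $\BST(\wvec,n)$ onto $\BSP(\wvec,n)$, it suffices to count permutations $\sigma \in \symS_{n+k}$ satisfying the $k$ pairwise constraints: for each $i \in \{1,\dotsc,k\}$, either $\sigma^{-1}(i)<\sigma^{-1}(n+i)$ or $\sigma^{-1}(i)>\sigma^{-1}(n+i)$, depending on $w_i$.

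The crucial step, and the only place the hypothesis $k\leq n$ is used, is to observe that the $k$ index-pairs $\{i,n+i\}$ for $i=1,\dotsc,k$ are pairwise disjoint subsets of $\{1,\dotsc,n+k\}$. Indeed, the values $n+i$ lie in $\{n+1,\dotsc,n+k\}$ while the values $i$ lie in $\{1,\dotsc,k\}$; these two ranges are disjoint precisely when $k\leq n$. Once the pairs are disjoint, the $k$ prescribed order-constraints on $\sigma^{-1}$ are independent.

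Next I would justify that independent constraints of this form reduce the count by a factor of $2^k$. The cleanest way is to consider the action on $\symS_{n+k}$ of the group generated by the $k$ commuting involutions $\tau_i$, where $\tau_i$ swaps the positions $\sigma^{-1}(i)$ and $\sigma^{-1}(n+i)$. Since the pairs $\{i,n+i\}$ are disjoint, these involutions generate a free $(\setZ/2\setZ)^k$-action, so every orbit has size exactly $2^k$ and contains a unique representative compatible with any prescribed combination of signs. Hence $|\BSP(\wvec,n)|=(n+k)!/2^k$, independent of $\wvec$, and the main formula follows.

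For the final sentence of the corollary, I would note that when $n<k$ the pairs $\{i,n+i\}$ start to overlap (the pair $\{i,n+i\}$ and the pair $\{i',n+i'\}$ share an element when $i-i'=\pm n$), so the above independence argument fails and different words can place incompatible constraints across overlapping pairs. I would not write out a full proof of word-dependency, but simply verify it with a minimal example, say comparing $|\BST(cr,1)|$ with $|\BST(cc,1)|$ or a similarly small instance, which is a finite check. The only real obstacle is phrasing the independence cleanly; once the disjointness of the pairs is observed, the rest is essentially a counting triviality.
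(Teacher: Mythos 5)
Your proposal is correct and follows essentially the same route as the paper: both reduce to counting $\BSP(\wvec,n)$ via the injectivity of $\psi$ and the characterization in \cref{Injection_to_S_m}, and both hinge on the observation that for $k\leq n$ the pairs $\{i,n+i\}$ are pairwise disjoint, so the $k$ order constraints each independently halve the count. Your explicit $(\setZ/2\setZ)^k$-action merely spells out the factor of $2^k$ that the paper leaves implicit.
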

\begin{proof}
From \cref{injectivity} we know $\psi$ is injective, thus $|\BST(\wvec,n)|=|\BSP(\wvec,n)|$ 
From the conditions in \cref{Injection_to_S_m} we know the 
relative order on all pairs $(i,n+i)$. 
As $n\geq k$, no such entry belongs to two such pairs, and thus $|\BSP(\wvec,n)|=(n+k)!/2^k$.
\end{proof}

\begin{corollary}
For every permutation $\sigma\in \symS_{n+k}$ there is exactly 
one word $\wvec$ of length $k$ such that $\sigma\in \BSP(\wvec,n)$.
In particular, $\psi$ is a bijection 
between $\lbrace \BST(\wvec,n):\wvec\in\lbrace r,c\rbrace^k\rbrace$ and $\symS_{n+k}$ and
\[
\sum_{\wvec\in\lbrace r,c\rbrace^k}|\BST(\wvec,n)|=(n+k)!.
\]
\end{corollary}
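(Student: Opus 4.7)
The plan is to use \cref{Injection_to_S_m} to read off the unique word $\wvec$ directly from the pair comparisons $\sigma^{-1}(i)$ vs $\sigma^{-1}(n+i)$, then assemble the bijection from the already-established injectivity on each piece.

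First I would fix an arbitrary $\sigma \in \symS_{n+k}$ and, for each $i$ with $1\leq i\leq k$, look at the two values $\sigma^{-1}(i)$ and $\sigma^{-1}(n+i)$. Since $n\geq 1$ we have $i\neq n+i$, so these two preimages are distinct integers; hence exactly one of the two inequalities $\sigma^{-1}(i)<\sigma^{-1}(n+i)$ or $\sigma^{-1}(i)>\sigma^{-1}(n+i)$ holds. Define $w_i=c$ in the first case and $w_i=r$ in the second. By \cref{Injection_to_S_m}, the word $\wvec=(w_1,\dotsc,w_k)$ constructed in this way is the unique word of length $k$ for which $\sigma\in\BSP(\wvec,n)$: any other word would fail one of the required inequalities at some index $i$.

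Next I would package this into the bijection statement. The first part shows that the sets $\BSP(\wvec,n)$, as $\wvec$ ranges over $\{r,c\}^k$, cover $\symS_{n+k}$, and uniqueness of $\wvec$ shows that this cover is a disjoint partition. On each piece, \cref{injectivity} tells us that $\psi\colon\BST(\wvec,n)\to\BSP(\wvec,n)$ is a bijection (it is injective, and $\BSP(\wvec,n)$ was defined as its image). Combining these two facts, the disjoint union $\bigsqcup_{\wvec\in\{r,c\}^k}\BST(\wvec,n)$ is in bijection with $\symS_{n+k}$ via $\psi$, which gives the enumeration
\[
\sum_{\wvec\in\{r,c\}^k}|\BST(\wvec,n)| \;=\; |\symS_{n+k}| \;=\; (n+k)!.
\]

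There is essentially no obstacle here; the content is already in \cref{Injection_to_S_m} and \cref{injectivity}. The only point worth stating carefully is that the argument does \emph{not} require the hypothesis $k\leq n$ used in \cref{cor:countingBST}: even when the pairs $(i,n+i)$ overlap (for $k>n$), the construction $w_i=c$ or $w_i=r$ depending on the sign of $\sigma^{-1}(i)-\sigma^{-1}(n+i)$ still defines a unique word, it is just that the conditions for distinct indices are no longer independent, which is why the individual counts $|\BST(\wvec,n)|$ become word-dependent while their total remains $(n+k)!$.
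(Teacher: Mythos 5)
Your proof is correct and follows essentially the same route as the paper's: both recover the unique word $\wvec$ from the relative order of each pair $\sigma^{-1}(i)$, $\sigma^{-1}(n+i)$ via \cref{Injection_to_S_m}, and combine this with the injectivity of $\psi$ from \cref{injectivity} to get the partition of $\symS_{n+k}$ and the count $(n+k)!$. Your closing remark that the argument needs no hypothesis $k\leq n$ (unlike \cref{cor:countingBST}) is a correct and worthwhile clarification, but the substance of the argument is the same.
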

\begin{proof}
From \cref{injectivity} we know $\psi$ restricted to one word is injective. 
From \cref{Injection_to_S_m} we deduce two different words cannot give the same 
permutation so $\psi$ is injective over the set of all words of length $k$. 
On the other hand, 
for any permutation $\sigma\in\symS_{n+k}$ 
there is always one word $\wvec\in \lbrace r,c\rbrace^k$ such that $\sigma\in \BSP(\wvec,n)$,
as we can recover the word from the pairs $(i,n+i)$.
Thus $\psi$ is also surjective.
\end{proof}

\begin{definition}
If $\sigma(i)-k>\sigma(i+1)$, $i$ is called a $k$-\emph{descent} of $\sigma$.
Let $\DES_k(\sigma)$ denote the set of $k$-descents of $\sigma$,
and let $\des_k(\sigma)$ be the number of such $k$-descents.
\end{definition}

\begin{example}
Let $\sigma=[2,4,\mathbf{10},5,6,3,\mathbf{8},1,7,9]$, 
then the $3$-descents of $\sigma$ are $3$ and $7$, marked in bold.
\end{example}

Let $s_1,\dotsc,s_{n-1}$ denote the simple transpositions in $\symS_n$. 
\begin{proposition}\label{CountingBSD}
Let $\wvec$ be a word of length $k$, $\sigma \in \BSP(\wvec,n)$ and $i\in \DES_n(\sigma)$, 
then the border-strip tableaux $\psi^{-1}(s_i\sigma)$ and $\psi^{-1}(\sigma)$ 
give rise to the same border-strip decomposition.
Moreover, 
\[
 |\BSD(\wvec,n)| = |\{ \sigma \in \BSP(\wvec,n) : \des_n(\sigma)=0 \}|
\]
that is, the number of elements in $\BSD(\wvec,n)$ is the number of permutations in $\BSP(\wvec,n)$ without $n$-descent.
\end{proposition}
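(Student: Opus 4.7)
For the first assertion I interpret $s_i\sigma$ as the relabeling of the BST $T=\psi^{-1}(\sigma)$ obtained by swapping the labels $i$ and $i+1$ on the two strips carrying them; the underlying border-strip decomposition is unchanged by this relabeling, so the content to verify is that the relabeled tableau $T'$ remains a valid BST. A short local case check shows that $T'$ is valid precisely when the strip $A$ labeled $i$ in $T$ and the strip $B$ labeled $i+1$ in $T$ are not in a direct ``above'' relation (as in \cref{def:comparable}).

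The heart of the argument is then showing that the $n$-descent condition rules out this relation. Suppose for contradiction that $A$ is above $B$, via a down/right lattice path from some cell of $A$ to some cell of $B$. The content $j-r$ of a cell changes by $\pm 1$ at each step, so the traversed contents form a $\pm 1$ walk starting in $A$'s content interval $[\sigma(i)-n+1,\sigma(i)]$ and ending in $B$'s interval $[\sigma(i+1)-n+1,\sigma(i+1)]$. The $n$-descent hypothesis $\sigma(i)-n>\sigma(i+1)$ renders these intervals disjoint with gap containing at least the integer $c^{*}=\sigma(i+1)+1$. A $\pm 1$ walk cannot skip integers, so it visits a cell of content $c^{*}$, which lies in a third strip $C\neq A,B$. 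Splitting the path at that cell exhibits $A$ above $C$ above $B$, forcing $\mathrm{label}(C)$ to be an integer strictly between $i$ and $i+1$---impossible.

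For the counting identity I treat the operation $\sigma\Rightarrow s_i\sigma$ for $i\in\DES_n(\sigma)$ as a rewriting system on $\BSP(\wvec,n)$. Each step preserves the BSD (by the first part) and strictly decreases $\inv(\sigma)$, so the system terminates; its normal forms are exactly the no-$n$-descent elements. Local confluence follows in two cases: rewrites at non-adjacent positions commute trivially, while for two $n$-descents at consecutive positions $i,i+1$ a direct three-step expansion (the braid identity applied to the affected triple) shows both orderings converge to the sorted reversal of the three entries. Newman's lemma then yields a unique normal form per rewriting class. Finally, two BSTs sharing the same BSD are linear extensions of the same ``above'' partial order on its strips, hence connected via swaps of adjacent incomparable labels; by \cref{comparable}, such incomparable strips have head diagonals differing by more than $n$, so each such swap is a rewriting step or its inverse. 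The rewriting classes therefore coincide with the fibers of the BSD-extraction map, and the bijection $|\BSD(\wvec,n)|=|\{\sigma\in\BSP(\wvec,n):\des_n(\sigma)=0\}|$ follows. The main technical obstacle is verifying local confluence in the braid case, which requires a careful three-term computation.
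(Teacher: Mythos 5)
Your proof is correct, but it takes a genuinely different route from the paper's, most visibly in the counting identity. For the first assertion the paper stays on the permutation side: it checks that $s_i\sigma$ still satisfies the conditions of \cref{Injection_to_S_m} (at most one of $\sigma^{-1}(j),\sigma^{-1}(n+j)$ moves, and only by $1$, so no sign can flip) and then notes that the only pair whose relative order changes never shares a diagonal, so the construction of the tableau --- and hence the BSD --- is unaffected. Your geometric version (the content intervals of the two strips are separated by a gap, so no cell of one is adjacent to a cell of the other and the relabelling stays a valid BST) proves the same thing more directly; note only that identifying $\psi^{-1}(s_i\sigma)$ with the relabelled tableau $T'$ uses the injectivity of $\psi$ once $T'$ is known to be valid, and that the ``third strip $C$'' detour is not even needed for validity --- adjacency alone already forces contents to differ by exactly $1$. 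For the second assertion the paper argues uniqueness head-on: it shows that any incomparable pair of strips whose labels are ordered against their head diagonals forces an $n$-descent somewhere between them, so a descent-free $\sigma$ has all relative orders (comparable and incomparable alike) pinned down; existence is exactly the termination half of your rewriting system. You instead invoke Newman's lemma, which obliges you to verify local confluence (your braid computation $(a,b,c)\to(c,b,a)$ both ways does check out, since $a-b>n$ and $b-c>n$ keep every intermediate swap a legal rewrite) and to connect each fiber via the standard fact that linear extensions of a poset are linked by adjacent incomparable transpositions, combined with the contrapositive of \cref{comparable}. Your route is longer but more systematic, making the ``unique normal form per fiber'' structure explicit; the paper's is shorter because it exhibits the canonical descent-free representative directly. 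Both arguments share the same implicit step that every border-strip decomposition of a simple diagram is realized by at least one border-strip tableau.
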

\begin{proof}
Let $\tau \coloneqq s_i \sigma$ and $T_\sigma$, $T_\tau$ be the corresponding border-strip tableaux. 
First we show that $\tau \in \BSP(\wvec,n$).
The only places where $\tau^{-1}$ differs from $\sigma^{-1}$ are $\tau(i)$ and $\tau(i+1)$. 
As $i$ is an $n$-descent, \cref{Injection_to_S_m} does not give any condition on their order. 
Suppose $j \in [k]$. Then at most one from 
$\sigma^{-1}(j) \text{ and }
\sigma^{-1}(n+j)
$
is different for $\tau^{-1}$ and
the quantities
\[
 \tau^{-1}(j)-\tau^{-1}(n+j) \text{ and } \sigma^{-1}(j)-\sigma^{-1}(n+j)
\]
are either the same or differ by $1$, so they can never have opposite signs.
Since $\sigma \in \BSP(\wvec,n)$, the conditions in \cref{Injection_to_S_m}
are still fulfilled for $\tau$ and we have that $\tau \in \BSP(\wvec,n)$.
\medskip 

\emph{It remains to show that $T_\tau$ and $T_\sigma$ 
have the same border-strip decomposition.}
The only strips which have a different number in $T_\sigma$ and $T_\tau$
are strip $\tau(i)$ and strip $\tau(i+1)$ and the new numbers differ by $\pm 1$.
Therefore, the only pair that has a different relative
ordering under $\tau$ than under $\sigma$ is the pair $(\tau(i),\tau(i+1))$.
However, since $i$ is an $n$-descent, it does not affect the construction
in the proof of \cref{Injection_to_S_m}, and as $\psi$ is injective, this implies 
that $T_\sigma$ and $T_\tau$ have the same BSD.

\medskip 

For the second statement, we will prove that there 
is exactly one permutation without any $n$-descent in $\BSP(\wvec,n)$
for a fixed border-strip decomposition.

\medskip
We claim that if there are two strips, $x$ and $y$, such that the three following conditions hold:
\begin{enumerate}
\item the strips $x$ and $y$ are not comparable in the sense of \cref{def:comparable}
\item $x>y$ 
\item $\sigma^{-1}(x)<\sigma^{-1}(y)$
\end{enumerate}
then $\sigma$ has an $n$-descent.

We consider the sequence $\sigma^{-1}(x)=a_1,a_1+1=a_2,\dotsc,a_m=\sigma^{-1}(y)$ and $i$ 
such that $\sigma(a_i)-\sigma(a_{i+1})$ is maximal.
If $\sigma(a_i)-\sigma(a_{i+1})\leq n$, then we can find a 
subsequence $\sigma^{-1}(x)=a_{i_1},a_{i_2},\dotsc,a_{i_s}=\sigma^{-1}(y)$ such that 
for all $j$ we have $|\sigma(a_{i_j})-\sigma(a_{i_j+1})|\leq n$.
But then \cref{comparable} implies $\sigma(a_{ij})$ and $\sigma(a_{ij+1})$ are comparable, 
and by transitivity, $x$ and $y$ are comparable, which contradicts our assumption.

This implies to avoid an $n$-descent, we must fix the relative order 
of all non-comparable pairs, but the relative order of comparable 
pairs is always fixed, 
which means there is at most one permutation without $n$-descent for a given decomposition.

On the other hand, we can always find such a permutation,
by starting from a permutation in $\BSP(\wvec,n)$ and repeatedly 
remove $n$-descents until a permutation without $n$-descents is obtained.
\end{proof}

\begin{corollary}\label{cor:SimpleBSDCharacterization}
Let $\wvec \in \{r,c\}^k$. The set of border-strip decompositions of the simple diagram $(\wvec,n)$
is in bijection with the set of permutations in $\symS_{n+k}$ such that
for each $i \in [k]$,
\begin{itemize}
 \item $w_i = c \quad \Longrightarrow \quad \sigma^{-1}(i) < \sigma^{-1}(n+i)$,
 \item $w_i = r \quad \Longrightarrow \quad \sigma^{-1}(i) > \sigma^{-1}(n+i)$ and
 \item $\sigma(j)-\sigma(j+1)\leq n$ for all $j\in[n+k-1]$.
\end{itemize}
\end{corollary}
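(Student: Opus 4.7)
The plan is to recognize that this corollary is essentially a repackaging of \cref{Injection_to_S_m} together with \cref{CountingBSD}, where the map from permutations to decompositions is $\sigma \mapsto \text{BSD}(\psi^{-1}(\sigma))$, the underlying border-strip decomposition of the tableau $\psi^{-1}(\sigma)$.

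First I would observe that the first two bullet points in the statement are exactly the characterization of $\BSP(\wvec,n)$ provided by \cref{Injection_to_S_m}, while the third bullet point is precisely the condition $\des_n(\sigma) = 0$ spelled out using the definition of $n$-descents. Thus the permutations described in the corollary are exactly the elements of the set $\{\sigma \in \BSP(\wvec,n) : \des_n(\sigma) = 0\}$.

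Next I would invoke \cref{CountingBSD}, which establishes a size-preserving correspondence between this set and $\BSD(\wvec,n)$. To upgrade the equality of cardinalities to an explicit bijection, I would spell out the map in both directions. Given $\sigma$ satisfying all three conditions, we set $D := \text{BSD}(\psi^{-1}(\sigma))$, which is well defined since $\sigma \in \BSP(\wvec,n)$ means $\psi^{-1}(\sigma) \in \BST(\wvec,n)$. Conversely, given a decomposition $D \in \BSD(\wvec,n)$, pick any $T \in \BST(\wvec,n)$ refining $D$ (which exists because the heads of $D$ determine a valid permutation via $\psi$), and apply the ``remove $n$-descents'' procedure from the last paragraph of the proof of \cref{CountingBSD} to obtain a permutation $\sigma_D$ satisfying the three conditions.

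Finally, I would check these two maps are mutual inverses. One direction is clear: starting from $\sigma$ without $n$-descents, passing to the decomposition and then back returns the unique $n$-descent-free permutation in $\BSP(\wvec,n)$ associated with that decomposition, which must be $\sigma$ itself by uniqueness established in \cref{CountingBSD}. The other direction is equally immediate, as the removal of $n$-descents keeps the underlying border-strip decomposition unchanged by the first half of \cref{CountingBSD}. There is no serious obstacle here; the only subtle point is uniqueness of the $n$-descent-free representative, but that is exactly what \cref{CountingBSD} provides.
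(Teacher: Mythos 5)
Your proposal is correct and matches the paper's intent exactly: the paper gives no explicit proof of this corollary precisely because it is the immediate combination of \cref{Injection_to_S_m} (first two bullets characterize $\BSP(\wvec,n)$) and \cref{CountingBSD} (the third bullet is $\des_n(\sigma)=0$, and each decomposition has a unique $n$-descent-free representative). Your spelled-out inverse maps and the uniqueness check are exactly the right justification for upgrading the cardinality statement to a bijection.
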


\bigskip 

\begin{definition}
For a word $\wvec \in \{r,c\}^k$ let 
\begin{equation}
 \hat{f}_{\wvec}(n):=|\BSD(\wvec,n)|\frac{(2k)!}{(n-k)!}.
\end{equation}
\end{definition}

\begin{proposition}\label{Polynomiality}
Whenever $n>2k-1$, the function $\hat{f}_{\wvec}(n)$ is equal to
\begin{align}\label{eq:fFormula}
f_{\wvec}(n) = \sum_{\tau \in \BSP(\wvec,k)}(n+k-\des_k(\tau))_{2k}.
\end{align}
As a consequence, $\hat{f}_{\wvec}(n)$ is a polynomial in $n$ of 
degree $2k$ with integer coefficients when restricted to 
values $n>2k-1$. Moreover, $f_{\wvec}(n)$ is divisible by the falling factorial $(n+1)_{k+1}$.
\end{proposition}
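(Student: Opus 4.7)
The plan is to leverage \cref{cor:SimpleBSDCharacterization}, which identifies $\BSD(\wvec,n)$ with the set of permutations $\sigma \in \symS_{n+k}$ satisfying the pair conditions together with the no-$n$-descent condition $\sigma(j)-\sigma(j+1) \leq n$. I would count such $\sigma$ by restricting to the $2k$ \emph{special} values $\{1,\ldots,k\} \cup \{n+1,\ldots,n+k\}$. Reading off the special entries in their left-to-right order and relabelling by the unique order-preserving bijection onto $[2k]$ produces a permutation $\tau \in \symS_{2k}$; the pair constraints for $\sigma$ translate verbatim, so $\tau \in \BSP(\wvec,k)$.

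The key observation is that any \emph{middle} value (one in $\{k+1,\ldots,n\}$) differs from any neighbour by at most $n-1$, so $n$-descents of $\sigma$ can only occur between two adjacent special positions, and such an $n$-descent corresponds precisely to a $k$-descent of $\tau$ at the matching restricted index. Thus, for fixed $\tau$ with $d \coloneqq \des_k(\tau)$, the preimages of $\tau$ are parametrised by a choice of positions $p_1 < \cdots < p_{2k}$ in $[n+k]$ with $p_{i+1} - p_i \geq 2$ at each $i \in \DES_k(\tau)$, together with an arbitrary arrangement of the $n-k$ middle values in the remaining positions. A standard gap substitution gives $\binom{n+k-d}{2k}$ admissible position choices, so
\[
 |\BSD(\wvec,n)| = (n-k)! \sum_{\tau \in \BSP(\wvec,k)} \binom{n+k-\des_k(\tau)}{2k}.
\]
Multiplying both sides by $(2k)!/(n-k)!$ yields $\hat f_\wvec(n) = f_\wvec(n)$, and polynomiality of degree $2k$ with integer coefficients is then immediate from the closed form.

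For divisibility by $(n+1)_{k+1}$, it suffices to show $f_\wvec$ vanishes at each of the $k+1$ integers $n \in \{-1, 0, 1, \ldots, k-1\}$. I would first establish the structural bound $\des_k(\tau) \leq k-1$ for every $\tau \in \symS_{2k}$: each $k$-descent at position $i$ forces $\tau(i+1) \leq k-1$, and these values $\tau(i+1)$ are distinct across the descent positions, so at most $k-1$ descents can fit. Then for any $n \in \{-1,\ldots,k-1\}$ and any occurring $d \leq k-1$, the falling factorial $(n+k-d)_{2k}$ is a product of $2k$ consecutive integers whose range $[n-k-d+1, n+k-d]$ contains $0$, so every term in the sum vanishes. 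The main obstacle I expect is making the "middle values cannot cause $n$-descents" argument rigorous---this is exactly where the hypothesis $n > 2k-1$ is critical, as it guarantees that the three value-blocks $[1,k]$, $[k+1,n]$, $[n+1,n+k]$ are disjoint with sufficient separation---and subsequently translating the adjacent-special-position condition between $\sigma$ and $\tau$ cleanly without off-by-one errors in the gap substitution.
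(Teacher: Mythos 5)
Your proposal is correct and follows essentially the same route as the paper's proof: factor a valid permutation into the relative order of the $2k$ special entries (yielding $\tau\in\BSP(\wvec,k)$ after standardization), a choice of their positions subject to a gap after each $k$-descent of $\tau$ (giving $\binom{n+k-\des_k(\tau)}{2k}$), and a free arrangement of the $n-k$ middle entries. Your divisibility argument via the roots $n=-1,0,\dotsc,k-1$ is a minor, equally valid variant of the paper's direct observation that each factor $(n+k-\des_k(\tau))_{2k}$ contains $(n+1)_{k+1}$.
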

\begin{proof}
Interpreting permutations in $\symS_{n+k}$ as sequences of $n+k$ numbers, 
we note that the first two conditions in \cref{cor:SimpleBSDCharacterization} only 
apply to the relative order of the first and last $k$ elements. 
%
%

Thus, in order to construct a permutation $\sigma$ in $\symS_{n+k}$ fulfilling the three conditions in 
\cref{cor:SimpleBSDCharacterization}, we proceed in three initial steps:

\begin{enumerate}
\item Choose an ordering of the entries $1,2,\dotsc,k,n+1,n+2,\dotsc,n+k$.
\item Choose the positions of the entries $1,2,\dotsc,k,n+1,n+2,\dotsc,n+k$.
\item Choose an ordering of the entries $k+1,\dotsc,n$.
\end{enumerate}
Not all choices here will fulfill the conditions in 
\cref{cor:SimpleBSDCharacterization}, we shall see below which ones are valid.
For a choice in the first step, two things might happen:
\begin{enumerate}
\item[a)] There is some pair $(i,i+k)$ in the wrong order --- violating one of the first two conditions. 
In this case we do not have a BST, and thus no BSD corresponding to this choice.
\item[b)] All pairs $(i,i+k)$ have the correct order.
In this case, the ordering of the entries
\[
1,2,\dotsc,k,n+1,n+2,\dotsc,n+k
\]
fulfill the conditions (after standardization) of being a permutation $\tau$ in $\BSP(\wvec,k)$.
\end{enumerate}
Now we need to ensure that there are no $n$-descents in the final permutation. 
If there are no $k$-descents in $\tau$ (from step $b$ above), this is always the case.
Otherwise, we need to insert another number after every $k$-descent of $\tau$. 
This means we only have $\binom{n+k-\des_k(\tau)}{2k}$ valid choices in step (2). 
The last step always has $(n-k)!$ valid choices as the order on $k+1,\dotsc,n$ does not matter. 
It follows that $f_{\wvec}(n)$ is given by 
\begin{align*}
f_{\wvec}(n) & =\frac{(2k)!}{(n-k)!} \sum_{\tau \in \BSP(\wvec,k)}  \binom{n+k-\des_k(\tau)}{2k} (n-k)! \\
  & =\sum_{\tau \in \BSP(\wvec,k)}(n+k-\des_k(\tau))_{2k}.
\end{align*}
This function is obviously a polynomial of degree $2k$. 
Furthermore, since $\des_k(\tau)$ is between $0$ and $k-1$
it follows that $(n+k-\des_k(\tau))_{2k}$ is divisible by $(n+1)_{k+1}$.
\end{proof}

\begin{corollary}
We have the enumeration
\[
|\BSD(rc,n)|=(n+1)!(3n+2)/12 \text{ whenever } n\geq2.
\]
\end{corollary}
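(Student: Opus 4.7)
The plan is to apply \cref{Polynomiality} with $\wvec = rc$ (so $k=2$). I would first identify the permutation set $\BSP(rc,2)\subseteq\symS_4$ via \cref{Injection_to_S_m}: since $w_1=r$ and $w_2=c$, these are the six permutations in which $3$ precedes $1$ and $2$ precedes $4$, namely
\[
2314,\quad 2341,\quad 2431,\quad 3124,\quad 3214,\quad 3241,
\]
in agreement with the count $4!/2^2=6$ from \cref{cor:countingBST}. A direct inspection of consecutive differences gives $\des_2=0$ for $2314,2431,3124,3214$ and $\des_2=1$ for $2341,3241$.

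Substituting into the formula from \cref{Polynomiality} (whose hypothesis $n>2k-1=3$ needs $n\geq 4$) yields
\[
f_{rc}(n)=4(n+2)_4+2(n+1)_4=2(n+1)n(n-1)\bigl[2(n+2)+(n-2)\bigr]=2(n+1)n(n-1)(3n+2),
\]
and dividing by $(2k)!/(n-k)!=24/(n-2)!$ gives $|\BSD(rc,n)|=(n+1)!(3n+2)/12$ for $n\geq 4$.

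The main obstacle is disposing of the edge cases $n=2$ and $n=3$, which fall outside the stated range of \cref{Polynomiality}. For $n=2=k$, \cref{CountingBSD} applies directly: $|\BSD(rc,2)|$ equals the number of $\tau\in\BSP(rc,2)$ with no $2$-descent, namely four, matching the formula evaluated at $n=2$. For $n=3$, one can either revisit the insertion argument in the proof of \cref{Polynomiality} and verify that it still goes through at $n=2k-1$ (since even at the extremal descent number $d=k-1=1$, the quantity $\binom{n+k-d}{2k}=\binom{4}{4}=1$ is counted correctly, so the same enumeration yields $4\cdot\binom{5}{4}+2\cdot\binom{4}{4}=22$), or alternatively enumerate by hand the $22$ ribbon tilings of the shape $(4,4,4,3)$ with strips of size $3$. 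Either route completes the verification for $n\in\{2,3\}$ and finishes the proof.
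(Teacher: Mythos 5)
Your proof is correct and follows the same route as the paper, namely \cref{Polynomiality}; the difference is only one of execution. The paper's proof simply notes that $\hat{f}_{rc}(n)$ is a polynomial of degree $4$ for $n\geq 4$ and appeals to ``verifying the formula for the first few values of $n$'', whereas you evaluate the sum \eqref{eq:fFormula} in closed form by listing the six elements of $\BSP(rc,2)$ together with their $\des_2$-statistics (four with $\des_2=0$, two with $\des_2=1$), obtaining $f_{rc}(n)=2(n+1)n(n-1)(3n+2)$ directly; this is more self-contained than interpolation from numerical data, and your arithmetic checks out. You also handle the boundary cases explicitly --- $n=2$ via \cref{CountingBSD} and $n=3$ by observing that the insertion count $\binom{n+k-\des_k(\tau)}{2k}$ in the proof of \cref{Polynomiality} remains valid at $n=2k-1$ (or by direct enumeration of the $22$ tilings) --- which is a genuine improvement, since the paper asserts the formula for all $n\geq 2$ but its stated argument only covers $n\geq 4$.
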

\begin{proof}
Using \cref{Polynomiality}, we know that $|\BSD(rc,n)|$ can 
be expressed as $(n-2)!\hat{f}_{rc}(n)/4!$.
Since we know that $\hat{f}_{rc}(n)$ is a polynomial in $n$ for $n \geq 4$, 
it suffices to verify the formula for the first few values of $n$.
\end{proof}
The sequence $a_{n+1} = (n+1)!(3n+2)/12$ appear as \cite[\texttt{A227404}]{OEIS}, 
where $a_n$ count the total number of inversions in all permutations in $\symS_n$
consisting of a single cycle. For example, the permutations $(123)$ and $(132)$
have four inversions in total, giving $a_3=4$.

\begin{lemma}
Let $\sigma \in \BSP(\wvec,n)$, with $T_\sigma$ being the corresponding border-strip decomposition.
Then the strips $i$ and $j$ in $T_\sigma$
with $i<j$ form an inversion if and only if $j-i<n$ and $\sigma^{-1}(i)>\sigma^{-1}(j)$.
\end{lemma}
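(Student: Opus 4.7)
The plan is to translate each of the three defining conditions of an inversion, in order, into the claimed conditions on $i,j,\sigma$. Fix $i<j$ and write $B_i, B_j$ for the corresponding strips of $T_\sigma$.

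\emph{Content-sequence intersection.} Every strip in $\BSD(\wvec,n)$ has size $n$, and since strip $k$'s head sits in diagonal $k$, its content-sequence spans exactly the diagonals $k-n+1, k-n+2, \dotsc, k$. The intersection of the content-sequences of $B_i$ and $B_j$ is therefore $\{j-n+1,\dotsc,i\}$, which is non-empty if and only if $j-i<n$. Under this assumption \cref{comparable} also ensures that $B_i$ and $B_j$ are comparable, so exactly one of them is inner to the other.

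\emph{Identifying the role of $B_1$.} The head of $B_j$ lies in diagonal $j$, that of $B_i$ in diagonal $i<j$, and by construction larger diagonal indices correspond to larger contents (more top-right positions). Hence the head box of $B_j$ has strictly greater content than the head box of $B_i$, and so the condition $\head{B_1}>\head{B_2}$ forces $B_1=B_j$, $B_2=B_i$. The existence of an inversion thus reduces to the single statement that $B_j$ is inner to $B_i$.

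\emph{Conversion to a condition on $\sigma^{-1}$.} By the remark immediately after \cref{def:comparable}, a strip is inner to another if and only if it carries a strictly smaller label in every BST realizing the given BSD. Applied to $T_\sigma$ this gives: $B_j$ is inner to $B_i$ if and only if $\sigma^{-1}(j)<\sigma^{-1}(i)$. Combining the three equivalences yields the lemma. The one subtlety worth flagging — and really the only step that is not purely bookkeeping — is to recognize that the comparison $\head{B_1}>\head{B_2}$ in the definition of inversion refers to the content of the head boxes, not to their labels in the tableau; the latter reading would make the condition vacuous, since an inner strip always has the smaller label.
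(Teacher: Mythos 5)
Your proof is correct and takes essentially the same route as the paper's: both reduce the content-overlap condition to $j-i<n$ via the fact that strip $m$ occupies diagonals $m-n+1,\dotsc,m$, and both identify the inner/outer relation between the two (necessarily comparable) strips from the order of their labels $\sigma^{-1}(i),\sigma^{-1}(j)$. The paper's version is a two-sentence sketch of this argument; yours merely spells out the head-content comparison and the use of comparability explicitly.
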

\begin{proof}
If $j-i\geq n$ they do not have an element on the same diagonal, 
and by definition do not form an inversion. 
If $j-i<n$ they share an element on the same diagonal, 
and if $\sigma^{-1}(i)>\sigma^{-1}(j)$ strip $j$ is above strip $i$, and we have an inversion.
\end{proof}
Given a border-strip decomposition $T$, let $\inv(T)$ denote the total number of inversions in $T$. 
Furthermore, for $\sigma \in \symS_{n+k}$ let
\[
 \inv_n(\sigma) \coloneqq \{ (i,j) : 0<j-i<n \text{ and } \sigma^{-1}(i)>\sigma^{-1}(j) \}.
\]

The $q$-analogue of $\BSD(\wvec,n)$ is defined as
\begin{align}
 \sum_{T \in \BSD(\wvec,n)} q^{\inv(T)}
\end{align}
and by previous lemma we have that 
\begin{align}
 \sum_{T \in \BSD(\wvec,n)} q^{\inv(T)} = \sum_{\sigma \in \BSP(\wvec,n)} q^{\inv_n(\sigma)}.
\end{align}

\begin{corollary}\label{cor:MahonianEnumeration}
The $q$-analogue of the $n\times n$-square, $\BSD(\emptyset,n)$, satisfies the identity
\[
\sum_{T \in \BSD(\emptyset,n)} q^{\inv(T)} = [n]_q!.
\]
\end{corollary}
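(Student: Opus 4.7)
The plan is to specialize \cref{cor:SimpleBSDCharacterization} to the empty word (so $k=0$) and observe that every condition collapses. The first two bullet points are vacuous, and the third, $\sigma(j)-\sigma(j+1)\le n$ for all $j \in [n-1]$, is automatically satisfied: since $\sigma \in \symS_n$ takes values in $[n]$, no consecutive difference can reach $n$. Hence the corollary identifies $\BSD(\emptyset,n)$ with all of $\symS_n$.

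Next, I would transport the inversion statistic across this bijection. By the lemma preceding \cref{cor:SimpleBSDCharacterization}, strips $i<j$ in the decomposition $T_\sigma$ form an inversion precisely when $j-i < n$ and $\sigma^{-1}(i) > \sigma^{-1}(j)$. Here the labels satisfy $1 \le i < j \le n$, so $j-i < n$ is automatic. Therefore
\begin{equation*}
\inv(T_\sigma) \;=\; \bigl|\{(i,j) : 1 \le i < j \le n,\ \sigma^{-1}(i) > \sigma^{-1}(j)\}\bigr| \;=\; \inv(\sigma^{-1}).
\end{equation*}

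Summing over $\sigma \in \symS_n$ and applying the involution $\sigma \mapsto \sigma^{-1}$, which preserves the uniform measure on $\symS_n$, gives
\begin{equation*}
\sum_{T \in \BSD(\emptyset,n)} q^{\inv T} \;=\; \sum_{\sigma \in \symS_n} q^{\inv(\sigma^{-1})} \;=\; \sum_{\sigma \in \symS_n} q^{\inv \sigma} \;=\; [n]_q!,
\end{equation*}
using the classical Mahonian identity in the last step. There is no real obstacle to overcome: the proof is purely a definitional unwinding, and the reason it works so cleanly is precisely that the empty word simultaneously trivializes the word-induced parity conditions on $(i, n+i)$ and the $n$-descent obstruction, so that the general $q$-enumeration $\sum_T q^{\inv T}$ recovers the standard inversion-generating function on $\symS_n$.
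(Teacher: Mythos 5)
Your proof is correct and follows essentially the same route as the paper: both identify $\BSD(\emptyset,n)$ with all of $\symS_n$ (the paper via \cref{Injection_to_S_m} and \cref{CountingBSD}, you via their combination in \cref{cor:SimpleBSDCharacterization}) and then transport the inversion statistic through the preceding lemma to recover the classical Mahonian identity. Your write-up is somewhat more explicit about why the $n$-descent condition is vacuous and about the $\sigma\mapsto\sigma^{-1}$ step, but the substance is identical.
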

\begin{proof}
From \cref{Injection_to_S_m} we know all permutations in $\symS_n$ are 
in $\BSP(\emptyset,n)$, from \cref{CountingBSD}, 
we know all BST correspond to BSD, and from the previous 
result we deduce the $q$-analogue is given by $[n]_q!$
\end{proof}

\begin{corollary}
We have the following $q$-analogue for $\BSD(c,n)$:
\[
\sum_{T\in \BSD(c,n)} q^{\inv T} = [n-1]_q! \sum_{i=1}^n i q^{i-1}.
\]
\end{corollary}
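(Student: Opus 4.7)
The plan is to specialize \cref{cor:SimpleBSDCharacterization} to $\wvec=c$ (so $k=1$). Any $\sigma\in\symS_{n+1}$ automatically satisfies $\sigma(j)-\sigma(j+1)\le n$ (the maximum possible gap is $n+1-1=n$), so the no-$n$-descent condition is vacuous; hence $\BSD(c,n)$ is in bijection with the set of $\sigma\in\symS_{n+1}$ satisfying $\sigma^{-1}(1)<\sigma^{-1}(n+1)$. Moreover, the only value-pair $(i,j)$ with $j-i\ge n$ in $\symS_{n+1}$ is $(1,n+1)$, and by our constraint this pair is never an inversion, so $\inv_n(\sigma)=\inv(\sigma)$ on the relevant set. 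By the lemma preceding \cref{cor:MahonianEnumeration}, the left-hand side therefore equals $\sum_\sigma q^{\inv(\sigma)}$ summed over permutations $\sigma\in\symS_{n+1}$ in which $1$ precedes $n+1$.

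I would then stratify by the positions $a=\sigma^{-1}(1)$ and $b=\sigma^{-1}(n+1)$, with $1\le a<b\le n+1$. Removing these two entries leaves $n-1$ positions carrying the values $\{2,\dots,n\}$ in some order, whose relative order standardizes to a permutation $\pi\in\symS_{n-1}$. A direct accounting of inversions of $\sigma$ yields three contributions: inversions involving the value $1$ number exactly $a-1$ (every value to the left of position $a$ is larger than $1$); inversions involving the value $n+1$ number exactly $n+1-b$ (the pair $(1,n+1)$ is excluded by our hypothesis, and the $n+1-b$ positions to the right of $b$ all carry values in $\{2,\dots,n\}$); and the remaining inversions coincide with $\inv(\pi)$. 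Summing over all placements and using the classical Mahonian identity $\sum_{\pi\in\symS_{n-1}}q^{\inv(\pi)}=[n-1]_q!$ (also the content of \cref{cor:MahonianEnumeration}) yields
\[
\sum_\sigma q^{\inv(\sigma)}=[n-1]_q!\sum_{1\le a<b\le n+1}q^{(a-1)+(n+1-b)}.
\]

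The final step is a short reindexing: setting $c=b-a$ collapses the inner sum to $\sum_{c=1}^n(n+1-c)q^{n-c}$, and then $i=n+1-c$ turns this into $\sum_{i=1}^n i\,q^{i-1}$, which matches the right-hand side of the corollary. I foresee no real obstacle in this argument; the only subtle points are noting that the no-$n$-descent condition is vacuous in $\symS_{n+1}$ and that $\inv_n(\sigma)=\inv(\sigma)$ on our constrained set, after which the argument becomes a clean reduction to an unconstrained Mahonian enumeration on $\symS_{n-1}$.
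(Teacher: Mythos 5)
Your proof is correct and follows essentially the same route as the paper: both identify $\BSD(c,n)$ with permutations of $\symS_{n+1}$ in which $1$ precedes $n+1$ (noting that $n$-descents cannot occur), and both factor the inversion generating function into the $[n-1]_q!$ contribution from ordering $2,\dotsc,n$ and the $\sum_{i=1}^n i\,q^{i-1}$ contribution from the positions of $1$ and $n+1$. Your write-up merely makes explicit the reindexing and the verification that $\inv_n(\sigma)=\inv(\sigma)$ on the constrained set, which the paper leaves implicit.
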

\begin{proof}
We get a permutation corresponding to a decomposition by placing $1$ and $n+1$ 
(\emph{i.e}. choose $\sigma^{-1}(1)$ and $\sigma^{-1}(n+1)$), and then choose the 
order of $2,\dotsc,n$. This choice gives $[n-1]_q!$, and the 
possible positions of $1$ and $n+1$ gives $\sum_{i=1}^n i q^{i-1}$, as $1$ has 
to be before $n+1$ for it to be a BST. 
Note that there cannot be any $n$-descents and therefore the number of border-strip tableaux 
is equal to the number of decompositions.
\end{proof}

\begin{proposition}
If $\wvec$ is a word of a simple diagram, then 
\[
|\BSD(c\wvec,n)|+|\BSD(r\wvec,n)|=(n+1)|\BSD(\wvec)|.
\]
Furthermore, this relation extends to the following $q$-analogue:
\[
\sum_{T\in \BSD(c\wvec,n)} q^{\inv T} + \sum_{T\in \BSD(r\wvec,n)} q^{\inv T} 
= [n+1]_q \sum_{T\in \BSD(\wvec,n)} q^{\inv T}
\]
\end{proposition}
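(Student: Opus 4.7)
My plan is to build an explicit $(n+1)$-to-one bijection from the permutations representing $\BSD(\wvec,n)$ to those representing $\BSD(c\wvec,n)\cup\BSD(r\wvec,n)$, using the characterisation in \cref{cor:SimpleBSDCharacterization}. Given $\sigma\in\symS_{n+k}$ representing $T\in\BSD(\wvec,n)$, I insert the value $1$ at a position $p\in\{1,\dots,n+k+1\}$ and shift every other entry of $\sigma$ up by one, producing $\tau\in\symS_{n+k+1}$. Since both entries of each pair $(i,n+i)$ are shifted by one, the pair-conditions encoded by $\wvec$ for $\sigma$ become exactly the pair-conditions encoded by the suffix $\wvec$ of $c\wvec$ (or of $r\wvec$) for $\tau$; only the new leading pair $(1,n+1)$ and the no-$n$-descent condition remain to be checked.

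Next I would characterise the valid insertion sites. The only potentially new $n$-descent of $\tau$ occurs at position $p-1$, where $\tau(p-1)-\tau(p)=\sigma(p-1)$; thus $\tau$ has no $n$-descent if and only if $p=1$ or $\sigma(p-1)\le n$. Since $\sigma$ has exactly $k$ values exceeding $n$, this forbids exactly $k$ positions, leaving precisely $n+1$ valid $p$. A direct computation comparing $\tau^{-1}(1)=p$ with $\tau^{-1}(n+1)$ (essentially the position of $n$ in $\sigma$) shows that $p\le\sigma^{-1}(n)$ produces $\tau\in\BSP(c\wvec,n)$ while $p>\sigma^{-1}(n)$ produces $\tau\in\BSP(r\wvec,n)$, giving the cardinality identity.

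For the $q$-analogue I would track how $\inv T$ changes. Inversions of $T_\tau$ not involving the newly inserted innermost strip are in weight-preserving bijection with inversions of $T_\sigma$ via the value shift. Those that do involve the new strip correspond, by the lemma preceding the proposition, to pairs $(1,v+1)$ in $\tau$ with $v\in\{1,\dots,n\}$ and $\sigma^{-1}(v)<p$, contributing $|\{v\in\{1,\dots,n\}:\sigma^{-1}(v)<p\}|$ additional inversions. Writing the positions of $1,\dots,n$ in $\sigma$ as $a_1<a_2<\dots<a_n$, the valid insertion sites are $p=1,a_1+1,\dots,a_n+1$, whose respective inversion contributions are exactly $0,1,\dots,n$. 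Summing $q$ to these exponents yields $[n+1]_q$ independently of $\sigma$, and summing the bijection's weights over all $\sigma$ then gives $[n+1]_q\sum_T q^{\inv T}$, the claimed refined identity. The main obstacle I anticipate is verifying carefully that the strip-level inversion count on $T_\tau$ matches the permutation-level count used above, in particular tracking the content-interval condition for the newly inserted strip; once this is settled the identity follows directly.
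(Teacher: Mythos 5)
Your route is genuinely different from the paper's. The paper argues geometrically: fixing the heads of a decomposition of $(\wvec,n)$, the new head of $(c\wvec,n)$ or $(r\wvec,n)$ can be inserted into its diagonal in $n+1$ ways altogether, and inserting it in position $i$ creates $i-1$ inversions with the strips above it. You instead work entirely on the permutation side of \cref{cor:SimpleBSDCharacterization}, inserting the value $1$ into $\sigma\in\symS_{n+k}$ at one of $n+1$ admissible positions. Your analysis of the pair conditions, of which insertions create an $n$-descent, and of the dichotomy $p\le\sigma^{-1}(n)$ versus $p>\sigma^{-1}(n)$ separating the $c$- and $r$-cases is correct, and it yields a more verifiable proof of the cardinality identity than the paper's sketch. (Do add one line checking surjectivity: deleting the value $1$ from an admissible $\tau$ cannot create an $n$-descent, since $\tau(p-1)\le n+1$ and $\tau(p+1)\ge 2$ force $\tau(p-1)-\tau(p+1)\le n-1$.)

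The $q$-analogue step, however, has a concrete gap. You count the new inversions as pairs $(1,v+1)$ with $v\in\{1,\dots,n\}$, i.e.\ you allow the new strip to form inversions with a strip whose head is at distance exactly $n$. The lemma you cite only covers $0<j-i<n$, consistent with the definition of inversion, which requires the content-intervals to intersect; strips $1$ and $n+1$ occupy disjoint diagonal ranges. Under the lemma as stated the new strip can meet only the $n-1$ strips $2,\dots,n$, the $n+1$ insertion sites contribute the multiset $\{0,1,\dots,n-1\}$ with one value (depending on where $n$ sits in $\sigma$) repeated, and the resulting factor is $[n]_q+q^{j-1}$ for some $\sigma$-dependent $j$, not $[n+1]_q$; so your claim that the contributions are exactly $0,1,\dots,n$ does not follow from the cited lemma. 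This is not merely your slip: with the strict convention the stated $q$-identity is already false for $\wvec=\emptyset$, $n=2$ (the six domino tilings of $(c,2)$ and $(r,2)$ give $1+4q+q^2$, not $[3]_q[2]_q=1+2q+2q^2+q^3$), so the identity requires inversions to be counted between all comparable pairs in the sense of \cref{comparable}, i.e.\ $|i-j|\le n$, including distance exactly $n$. Your argument closes exactly under that convention, but as written the step is unjustified: you must either prove the $\le n$ version of the inversion lemma or flag that the inversion definition has to be adjusted.
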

\begin{proof}
If we fix the positions of the heads in $(\wvec ,n)$, the new head in $(c\wvec, n)$ must 
be above the strip it replaces, where as in $(r\wvec, n)$ it must be below. 
Together, this gives $n+1$ possibilities to complete a BSD of $(\wvec, n)$. 
If, in $(c\wvec, n)$ or in $(r\wvec ,n)$, we place the new head in position $i$ of the diagonal, 
the new strip forms an inversion with all $i-1$ strips above it, thus the $q$-analogue.
\end{proof}

\begin{corollary}\label{cor:totalNumberOfBorderStripDecomps}
We can count the total number of border-strip decompositions for all words of length $k$, more precisely:
\[
\sum_{\wvec\in\lbrace r,c\rbrace^k}|\BSD(\wvec,n)|=(n+1)^k n!
\]
and this relation extends to the $q$-analogue:
\[
 \sum_{\wvec \in \{r,c\}^k } \sum_{T \in \BSD(\wvec,n)} q^{\inv T}=[n+1]_q^k[n]_q!
\]
\end{corollary}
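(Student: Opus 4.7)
The plan is to prove both identities simultaneously by induction on the length $k$ of the word, using the preceding proposition as the inductive step. Throughout, I will work with the $q$-analogue, since the unrefined identity follows by specializing $q = 1$.

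For the base case $k = 0$, the only word is $\wvec = \emptyset$, and by \cref{cor:MahonianEnumeration} we have
\[
\sum_{T \in \BSD(\emptyset,n)} q^{\inv T} = [n]_q!,
\]
which matches $[n+1]_q^0 [n]_q!$.

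For the inductive step, assume the identity holds for all words of length $k-1$. Every word $\wvec \in \{r,c\}^k$ can be written uniquely as $c\wvec'$ or $r\wvec'$ for some $\wvec' \in \{r,c\}^{k-1}$, so splitting the outer sum accordingly gives
\begin{align*}
\sum_{\wvec \in \{r,c\}^k} \sum_{T \in \BSD(\wvec,n)} q^{\inv T}
 &= \sum_{\wvec' \in \{r,c\}^{k-1}} \left( \sum_{T \in \BSD(c\wvec',n)} q^{\inv T} + \sum_{T \in \BSD(r\wvec',n)} q^{\inv T} \right) \\
 &= \sum_{\wvec' \in \{r,c\}^{k-1}} [n+1]_q \sum_{T \in \BSD(\wvec',n)} q^{\inv T},
\end{align*}
where the second equality is the $q$-analogue in the preceding proposition. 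Pulling $[n+1]_q$ outside and applying the induction hypothesis to words of length $k-1$ yields
\[
[n+1]_q \cdot [n+1]_q^{k-1} [n]_q! = [n+1]_q^k [n]_q!,
\]
completing the induction. Specializing $q = 1$ then gives $(n+1)^k n!$.

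There is no real obstacle here; the work was already done in establishing the one-step recursion in the preceding proposition, and the present statement is simply its telescoping consequence. The only thing worth double-checking is that the base case $k=0$ lines up with the stated convention (the empty word corresponds to the $n\times n$ square), which it does via \cref{cor:MahonianEnumeration}.
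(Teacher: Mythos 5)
Your proof is correct and follows exactly the same route as the paper: induction on $k$ with base case \cref{cor:MahonianEnumeration}, splitting words of length $k$ by their first letter, and applying the $q$-analogue recursion from the preceding proposition, then specializing $q=1$. No issues.
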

\begin{proof}
It suffices to show the $q$-analogue, by taking $q=1$ we obtain the enumeration. We proceed by induction.

The base case, $k=0$, is given by \cref{cor:MahonianEnumeration}. The previous result gives the induction step:
\[\sum_{\wvec \in \{r,c\}^k } \sum_{T \in \BSD(\wvec,n)} q^{\inv T}=\]
\[
\sum_{\wvec \in \{r,c\}^{k-1} } \sum_{T \in \BSD(r\wvec,n)} q^{\inv T}+\sum_{\wvec \in \{r,c\}^{k-1} } \sum_{T \in \BSD(c\wvec,n)} q^{\inv T}=\]
\[
[n+1]_q\sum_{\wvec \in \{r,c\}^{k-1} } \sum_{T \in \BSD(\wvec,n)} q^{\inv T}
\]
\end{proof}

If we let $n=k-1$, we note that the sequence $a(n) = (n+1)^{n-1} n!$ is \texttt{A066319}.
This sequence also show up in \cite[Thm. 5.4]{Weist2012}.
Let $K_{n,n+1}$ be the complete bipartite graph with $n$ sources and $n+1$ sinks.
Then there are $a(n)$ spanning trees such that every source has exactly $2$ incident edges.
This is related to computing the Euler characteristic of certain moduli spaces,
see \cite{Weist2012} for details. 
This connection is quite interesting, as it is perhaps related to what we discuss in \cref{seq:wpConnection} below.

Recall the definition of $\hor(\wvec)$ as the difference between 
the number of occurrences of $c$ and $r$ in $\wvec$.
The following theorem shows that ``straighter'' shapes admits a larger number of decompositions,
in a precise sense:
\begin{theorem}\label{thm:straighterInequality}
If $\vvec$ and $\wvec$ are words of length $k$ and $|\hor(\vvec)|<|\hor(\wvec)|$, 
then 
\[
 |\BSD(\vvec,n)| > |\BSD(\wvec,n)| \text{ for $n$ sufficiently large.}
\]
In fact, 
\[
 \frac{|\BSD(\vvec,n)| - |\BSD(\wvec,n)|}{(n-k)!} = O(n^{2k-1}).
\]
\end{theorem}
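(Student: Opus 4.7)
The plan is to prove both claims by comparing $\hat{f}_{\vvec}$ and $\hat{f}_{\wvec}$ as polynomials in $n$, using the explicit formula from \cref{Polynomiality}. Since
\[
\frac{|\BSD(\vvec,n)| - |\BSD(\wvec,n)|}{(n-k)!} = \frac{\hat{f}_{\vvec}(n) - \hat{f}_{\wvec}(n)}{(2k)!},
\]
the main task is to control the degree and leading coefficient of the polynomial $\hat{f}_{\vvec} - \hat{f}_{\wvec}$.

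First I would show that the leading coefficient of $\hat{f}_{\wvec}(n)$ (as a polynomial in $n$) is independent of $\wvec$: each summand $(n+k-\des_k(\tau))_{2k}$ contributes leading term $n^{2k}$, so the coefficient of $n^{2k}$ in $\hat{f}_{\wvec}(n)$ equals $|\BSP(\wvec,k)|$, and by \cref{cor:countingBST} applied at $n=k$ this equals $(2k)!/2^{k}$, depending only on $k$. Consequently $\hat{f}_{\vvec} - \hat{f}_{\wvec}$ has degree at most $2k-1$, which immediately yields the stated $O(n^{2k-1})$ bound.

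For the strict inequality I would examine the next coefficient. Expanding
\[
(n+k-d)_{2k} = n^{2k} + (k - 2kd)\,n^{2k-1} + O(n^{2k-2}),
\]
the coefficient of $n^{2k-1}$ in $\hat{f}_{\wvec}$ equals $k\cdot|\BSP(\wvec,k)| - 2k\cdot D(\wvec)$, where $D(\wvec) := \sum_{\tau \in \BSP(\wvec,k)} \des_k(\tau)$. Since the first term is $\wvec$-independent, the theorem reduces to a strict monotonicity statement for $D(\wvec)$ as a function of $|\hor(\wvec)|$.

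To evaluate $D(\wvec)$ I would exploit the structure of $k$-descents in $\BSP(\wvec,k)$: a $k$-descent at position $j$ forces $(\tau(j),\tau(j+1)) = (k+i, m)$ with $1 \leq m < i \leq k$. Given $(j,i,m)$, \cref{Injection_to_S_m} confines the position of $i$ (resp.\ $k+m$) to either $\{1,\ldots,j-1\}$ or $\{j+2,\ldots,2k\}$ depending on $w_i$ (resp.\ $w_m$), while the remaining $k-2$ pairs contribute a universal factor $(2k-4)!/2^{k-2}$. A short case analysis on $(w_i,w_m)$ followed by summing over $j$ should reveal that the contribution from the pair $(i,m)$ depends only on whether $w_i = w_m$ or not. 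Summing over $1 \leq m < i \leq k$ then expresses $D(\wvec)$ as a linear function of $C_{\wvec} R_{\wvec} = (k^2 - \hor(\wvec)^2)/4$, from which the desired strict monotonicity in $|\hor(\wvec)|$ follows. The main obstacle will be the bookkeeping in this case analysis: one must evaluate the four polynomial sums such as $\sum_{j}(j-1)(2k-j-1)$ and $\sum_{j}(j-1)(j-2)$, recognise that they collapse into just two distinct values, and then verify that only the $C_{\wvec} R_{\wvec}$-dependence survives after summing over the unordered pairs.
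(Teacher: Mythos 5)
Your strategy coincides with the paper's proof almost line for line: both pass to the polynomial $f_{\wvec}(n)=\sum_{\tau\in\BSP(\wvec,k)}(n+k-\des_k(\tau))_{2k}$ from \cref{Polynomiality}, observe that the coefficient of $n^{2k}$ is $|\BSP(\wvec,k)|=(2k)!/2^{k}$ and hence independent of $\wvec$ (which already gives the $O(n^{2k-1})$ claim), identify the coefficient of $n^{2k-1}$ as $k|\BSP(\wvec,k)|-2k\,D(\wvec)$ with $D(\wvec)=\sum_{\tau}\des_k(\tau)$, and then evaluate $D(\wvec)$ by counting, for each pair $a<b$, the permutations containing the consecutive pattern $b+k,\,a$, the count depending only on whether $w_a=w_b$. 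Your extra bookkeeping with sums like $\sum_j(j-1)(2k-j-1)$ is just a more explicit version of the paper's block-placement argument; only the ratio between the two cases matters, so that part is fine.

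The genuine gap is the unexamined direction of the final monotonicity. In the case analysis, a pair with $w_a\neq w_b$ admits \emph{twice} as many arrangements as a pair with $w_a=w_b$ (when $w_a\neq w_b$ the entries $a+k$ and $b$ land on the same side of the glued block, leaving two orders; otherwise only one). Hence $D(\wvec)$ is an \emph{increasing} affine function of $C_{\wvec}R_{\wvec}=(k^2-\hor(\wvec)^2)/4$, so $D$ increases as $|\hor(\wvec)|$ decreases; since $D$ enters the $n^{2k-1}$ coefficient with a negative sign, a smaller $|\hor|$ yields a \emph{smaller} subleading coefficient and therefore \emph{fewer} decompositions for large $n$ --- the opposite of what the stated theorem asserts. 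A direct check at $k=2$ confirms this: one computes $|\BSD(cc,n)|-|\BSD(rc,n)|=(n-2)!\,n(n^2-1)/6>0$ even though $|\hor(cc)|=2>0=|\hor(rc)|$, consistent with the introduction's remark that rectangles maximize the count. So ``the desired strict monotonicity follows'' is precisely the step you cannot wave at: carried out carefully, your argument (like the paper's own proof, which contains the same sign slip in ``it suffices to prove $J_{\vvec}$ is strictly smaller for a straighter word'') establishes the inequality with $\vvec$ and $\wvec$ interchanged relative to the statement. You must either track the sign and conclude the reversed inequality, or observe that the hypothesis $|\hor(\vvec)|<|\hor(\wvec)|$ in the statement needs to be read the other way.
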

\begin{proof}
Recall from from \cref{Polynomiality} that 
\[
f_\vvec(n)=\sum_{\sigma \in \BSP(\vvec,k)}(n+k-\des_k(\sigma))_{2k}.
\]

From \cref{cor:countingBST}, we know that $|\BSP(\vvec,k)| = (2k)!/2^{2k}$.
It then follows that
\[
f_\vvec(n)=\frac{(2k)!}{2^k} n^{2k} + \alpha n^{2k-1} + \text{l.o.t} \text{ and  } 
f_\wvec(n)=\frac{(2k)!}{2^k} n^{2k} + \beta n^{2k-1} + \text{l.o.t}.
\]
Our goal is to prove that $\alpha<\beta$.

For a fixed permutation $\sigma \in \BSP(\vvec,k)$, its contribution to $\alpha$ is given by
\[
\sum_{i=0}^{2k-1} (k-\des_k(\sigma)- i ) = 2k^2-k(2k-1)-2k\des_k(\sigma).
\]
Hence,
\[
\alpha = k|\BSP(\vvec,k)| - 2k \sum_{\sigma \in \BSP(\vvec,k)} \des_k(\sigma).
\]
As $|\BSP(\vvec,k)|$ does not depend on $\vvec$, the only part depending on $\vvec$ is 
\[
J_\vvec:=\sum_{\sigma \in \BSP(\vvec,k)} \des_k(\sigma), 
\]
and it suffices to prove $J_\vvec$ is strictly smaller for a straighter word.

To do this, we count the number of permutations where $b+k$ is a $k-$descent with $a$, for $1\leq a<b\leq k$ 
fixed (\emph{i.e.} we have $\dotsc,b+k,a,\dotsc$ in the permutation). To create such a permutation, we can choose the 
order of all elements different from $a,b,a+k,b+k$ in any way respecting the orders 
of pairs $\sigma(i),\sigma(i+k)$, which gives $(2k-4)!/2^{k-2}$ choices. 
Then we must choose the order of the three blocks $a+k,(b+k)a,b$. If $a+k$ and $b$ 
are on the same side of $(b+k)a$, this gives two possibilities, otherwise there is 
only one way. We observe $a+k$ and $b$ are 
on the same side if and only if $\vvec_a \neq \vvec_b$. 
Finally, we can chose the position of the three blocks $a+k,b,(b+k)a$, which gives $\binom{2k}{3}$ choices. 
So the number of permutations where $b+k$ is a $k$-descent with $a$ is exactly 
\[
\begin{cases}
2\binom{2k}{3}(2k-4)!/2^{k-2} \text{ if $\vvec_a \neq \vvec_b$} \\
\phantom{2}\binom{2k}{3}(2k-4)!/2^{k-2} \text{ otherwise}.
\end{cases}
\]
Recall $C_\vvec$ is the number of $c$'s in $\vvec$, and $R_\vvec$ is the number of $r$'s in $\vvec$. The previous result implies
\[
J_\vvec=\binom{2k}{3}\frac{(2k-4)!}{2^{k-2}} \left[ 
\binom{C_\vvec}{2} + 2C_\vvec R_\vvec +  \binom{R_\vvec}{2}
\right]
\]
Since $R_\vvec=k-C_\vvec$, it follows that 
\[
\binom{C_\vvec}{2} + 2C_\vvec R_\vvec +  \binom{R_\vvec}{2} = \binom{k}{2} + C_\vvec R_\vvec
\]
which is increasing as $|\hor(\vvec)|$ decreases.
\end{proof}

\begin{conjecture}
The function $f_\wvec(n)$ uniquely define $\wvec$ up to isometry of the shape $\wvec$,
\emph{i.e.} up to exchanging $r$ and $c$ and reversing the word.
\end{conjecture}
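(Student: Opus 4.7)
The first move is to extract combinatorial data from the polynomial $f_\wvec(n)$. By \cref{Polynomiality},
\[
f_{\wvec}(n) = \sum_{j=0}^{k-1} A_j(\wvec)\,(n+k-j)_{2k}, \qquad A_j(\wvec) := \#\{\tau\in\BSP(\wvec,k):\des_k(\tau)=j\}.
\]
A standard argument---for instance, iterating the forward difference operator $\Delta^k$, after which $(n+k-j)_{2k}$ becomes $\tfrac{(2k)!}{k!}(n+k-j)_k$, and noting that the values of these at $n=-k,-k+1,\dotsc,0$ form an invertible triangular matrix---shows that the $k$ polynomials $\{(n+k-j)_{2k}\}_{j=0}^{k-1}$ are linearly independent. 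Hence $f_\wvec$ is equivalent to the $k$-tuple $(A_0(\wvec),\dotsc,A_{k-1}(\wvec))$, and the conjecture reduces to showing that this tuple determines $\wvec$ up to the Klein four-group generated by reversing $\wvec$ and by swapping $r\leftrightarrow c$.

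Next I would verify that both isometries preserve $(A_j)_j$. From the recursive definition of simple diagrams, reversing $\wvec$ corresponds to a $180$-degree rotation of $(\wvec,n)$, while swapping $r\leftrightarrow c$ corresponds to an antidiagonal reflection, so that the composition (reverse-and-swap) gives transposition. Each geometric symmetry sends border-strip decompositions to border-strip decompositions of the symmetric shape and, via \cref{Injection_to_S_m}, induces a bijection $\BSP(\wvec,k)\leftrightarrow \BSP(\wvec',k)$. Tracking how this bijection relabels positions and values of $\sigma$ shows that it preserves the $k$-descent statistic; this is a routine but concrete bookkeeping argument.

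The remaining step, showing that non-isometric $\wvec,\wvec'$ produce distinct tuples $(A_j)$, is the main obstacle. Two linear relations among the $A_j$ are already known: $\sum_j A_j(\wvec)=(2k)!/2^k$ is constant by \cref{cor:countingBST}, and the proof of \cref{thm:straighterInequality} yields
\[
\sum_{j=0}^{k-1} j\,A_j(\wvec) \;=\; \binom{2k}{3}\frac{(2k-4)!}{2^{k-2}}\left[\binom{k}{2}+C_{\wvec}R_{\wvec}\right],
\]
which only sees the isometry invariant $C_\wvec R_\wvec$. To separate words with matching $C_\wvec R_\wvec$---for example $\wvec=rcr$ versus $\wvec=rrc$---I would compute the higher moments $\sum_j j^m A_j(\wvec)$ by generalizing the block enumeration used for $m=1$: instead of tracking a single pair $(a,b)\in[k]^2$ giving rise to the block $b+k,a$, one analyses an ordered tuple $a_1<\dotsb<a_{m+1}$ and counts the permutations in which these indices and their shifts create exactly $m$ nested $k$-descents, recording the contribution as a polynomial in the pattern-counts of $\wvec$. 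The hard part is twofold: controlling the combinatorial explosion of the case analysis as $m$ grows, and proving that the full collection of such moments genuinely recovers $\wvec$ up to isometry rather than some coarser equivalence. I would pursue this in tandem with a computer verification for small $k$, which should either confirm the conjecture up to, say, $k=10$, or else expose a missing symmetry that forces a refinement of the statement.
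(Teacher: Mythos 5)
You should first be aware that this statement appears in the paper as a \emph{conjecture}: the paper does not prove it, and only records the observation---which your first step reproduces almost verbatim---that the polynomials $(n+k-i)_{2k}$ for $i=0,\dotsc,k-1$ are linearly independent, so that knowing $f_\wvec(n)$ is equivalent to knowing the multiset of $\des_k$-values over $\BSP(\wvec,k)$, i.e.\ your tuple $(A_0(\wvec),\dotsc,A_{k-1}(\wvec))$. Your reduction of the conjecture to the injectivity of $\wvec \mapsto (A_j(\wvec))_j$ modulo isometry is therefore exactly the paper's own reduction, and your check that reversal and $r\leftrightarrow c$ preserve the descent distribution is a reasonable (and necessary) sanity check that the paper leaves implicit.

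The genuine gap is your third step, which is where all of the mathematical content of the conjecture lives and which you do not carry out. The two functionals you can actually evaluate---the total count $(2k)!/2^k$ from \cref{cor:countingBST} and the first moment, which by the computation in the proof of \cref{thm:straighterInequality} depends only on $C_\wvec R_\wvec$---are manifestly too coarse: as you note, they cannot separate $rcr$ from $rrc$. The proposed fix via higher moments $\sum_j j^m A_j(\wvec)$ is a programme, not an argument: you exhibit no pair of non-isometric words that any higher moment actually distinguishes, no closed form for even the second moment, and no reason why the full collection of moments should recover $\wvec$ rather than some coarser pattern-count invariant that is itself constant on larger classes of words. You acknowledge this yourself by calling it ``the main obstacle'' and deferring to computer verification for small $k$. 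So what you have is a correct restatement and reduction of an open conjecture together with a research plan, not a proof; this is consistent with the fact that the paper itself leaves the statement open.
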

Note that for fixed $k$, the polynomials (in $n$)
\[
 (n+k-i)_{2k} \text{ with $i=0,\dotsc,k-1$}
\]
are linearly independent: these span the same space as
\[
\left\{ \frac{(n+i)_{2k}}{(2k)!} \right\}_{i=1}^k \quad = \quad
\left\{ \binom{n+i}{2k} \right\}_{i=1}^k,
\]
and the latter collection of polynomials can be seen to be linearly independent.

As a consequence, given $f_\wvec(n)$, which is a sum over permutations in $\BSP(\wvec,k)$,
for any $i$ we can extract the number of permutations $\sigma \in \BSP(\wvec,k)$ with $\des_k(\sigma)=i$.
Hence, the conjecture is reduced to determining if the multi-set of $\des_k$-values 
of the elements in $\BSP(\wvec,k)$ uniquely determines $\wvec$ up to isometry.

In particular, if the number of terms without $k$-descents is different, 
the polynomial is also different, so we can formulate the stronger 
conjecture that $|\BSD(\wvec,k)|$ uniquely determines a word $\wvec$ of length $k$ up to isometry.

\section{A connection with the Weil--Petersson volume}\label{seq:wpConnection}

It follows from \cref{cor:SimpleBSDCharacterization} that 
the set $\BSD(2n \times n,n)$ is in bijection with
the set of permutations of $\{x_1,\dotsc,x_n,y_1,\dotsc,y_n\}$
such that $x_i$ appear before $y_i$ for all $i$,
and we do not have $\dotsc,x_i,y_j,\dotsc$ (consecutive), such that $i>j$.

\begin{lemma}[Adaptation of \cite{ZiqingXiang294267}]
The cardinality of $\BSD(2n\times n, n)$ is given by the formula
\begin{equation}\label{eq:sumAsGraphs}
|\BSD(2n\times n, n)| = \sum_{p \vdash n} (-1)^{|p - 1|} \frac{1}{m!} \binom{|p|}{p} \binom{|p + 1|}{p + 1}, 
\end{equation}
where $m=(m_1,m_2,\dotsc,m_k)$ and $m_i$ is the multiplicity of $i$ in $p$,
and we use the notation
$p\pm 1 \coloneqq (p_1 \pm 1,\dotsc,p_k \pm 1)$ and $|p| = p_1+\dotsb + p_k$.
Note that $\binom{|p|}{p}$  and $\binom{|p + 1|}{p + 1}$ denote a multinomial coefficients.
\end{lemma}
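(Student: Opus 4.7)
The plan is to prove the identity by direct inclusion--exclusion, starting from the characterization given in \cref{cor:SimpleBSDCharacterization}. Setting $\wvec = c^n$, that corollary identifies $\BSD(2n\times n, n)$ with the arrangements of $\{x_1,\dotsc,x_n,y_1,\dotsc,y_n\}$ that satisfy $x_i$ before $y_i$ for every $i$, and that avoid the consecutive pattern $x_iy_j$ with $i > j$. Let $P$ be the set of arrangements satisfying only the precedences, and for each bad pair $(i,j)$ with $i > j$ let $B_{i,j} \subseteq P$ be the arrangements in which $x_i$ is immediately followed by $y_j$. Inclusion--exclusion then yields
\[
|\BSD(2n\times n, n)| = \sum_{S} (-1)^{|S|}\Bigl|P \cap \bigcap_{(i,j)\in S} B_{i,j}\Bigr|,
\]
summed over subsets $S$ of bad pairs.

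Next I will observe that only \emph{compatible} subsets $S$ contribute: each $x$-index can be followed by at most one letter and each $y$-index preceded by at most one, so $S$ corresponds to an injection $g \colon I \to J$ with $I, J \subseteq [n]$ and $g(i) < i$ for every $i \in I$. Viewing $g$ as a directed graph on $[n]$, injectivity and the strict decrease force a disjoint union of descending paths $i_1 > i_2 > \dotsb > i_\ell$; together with the isolated vertices, the path lengths form a partition $p = (p_1,\dotsc,p_k) \vdash n$. The number of compatible $g$ with a prescribed chain-length partition $p$ is therefore the number of set partitions of $[n]$ with block sizes $p_1,\dotsc,p_k$, which equals $\frac{1}{m!}\binom{|p|}{p}$ (the decreasing order within each block is forced), and the corresponding sign is $(-1)^{|I|} = (-1)^{n-k} = (-1)^{|p-1|}$.

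For a fixed $g$, I will compute $|P \cap \bigcap B_{i,g(i)}|$ by treating each merged block $[x_iy_{g(i)}]$ and each unmerged letter as atoms, giving $n + k$ atoms in total. A short case-analysis (depending on whether $i \in I$, $i \in J$, both, or neither), using the fact that within a $2$-block $[ab]$ the positions of $a$ and $b$ differ by exactly $1$, shows that each precedence $x_i \prec y_i$ translates into a \emph{strict} order between the atom containing $x_i$ and the atom containing $y_i$. For a single $g$-chain $i_1 > \dotsb > i_\ell$ these atom-level orders assemble into one totally ordered chain
\[
\{x_{i_\ell}\} \prec [x_{i_{\ell-1}}y_{i_\ell}] \prec [x_{i_{\ell-2}}y_{i_{\ell-1}}] \prec \dotsb \prec [x_{i_1}y_{i_2}] \prec \{y_{i_1}\}
\]
of $\ell + 1$ atoms. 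Hence the overall constraint poset is a disjoint union of chains of lengths $p_1 + 1,\dotsc,p_k + 1$, and its number of linear extensions is the multinomial $\binom{|p+1|}{p+1}$. Multiplying the three factors and summing over $p \vdash n$ produces the stated formula.

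The main obstacle is the bookkeeping in the preceding paragraph. I will need to verify in each of the four cases that the constraint $x_i \prec y_i$ really does reduce to a strict inequality between \emph{distinct} atoms (noting that $g(i) < i$ rules out $A_i = A'_i$), and then that along each $g$-chain the individual two-atom constraints glue into a single linear chain on $\ell + 1$ atoms rather than a weaker partial order with extra degrees of freedom. Once this is settled, the rest of the argument is the elementary enumeration of linear extensions of a disjoint union of chains.
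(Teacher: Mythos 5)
Your proposal is correct and follows essentially the same route as the paper: the paper encodes the forbidden adjacencies as edges of a graph $\Gamma_\sigma$ on $[n]$ and applies inclusion--exclusion over forced edge sets, which is exactly your inclusion--exclusion over the events $B_{i,j}$, with the same component-size partition $p$, the same count $\tfrac{1}{m!}\binom{|p|}{p}$ of forced-edge configurations, and the same block/atom decomposition yielding $\binom{|p+1|}{p+1}$ linear extensions. Your write-up is somewhat more explicit than the paper's about why incompatible adjacency sets contribute zero and why the descending order within each component is forced, but the underlying argument is identical.
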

\begin{proof}
For a permutation $\sigma\in \symS_{2n}$ corresponding to a \emph{border-strip tableau}, 
let $\Gamma_\sigma$ be the graph on the vertex set $[n]$ with edge set
\[
\{ (\sigma(i)-n,\sigma(i+1)) : \sigma(i)-n>\sigma(i+1) \}.
\]
Let $G$ be the set of graphs obtained from such border-strip tableaux.
Let $E$ be $\binom{[n]}{2}$, that is, the set of all possible edges
on the vertex set $[n]$ and let $G(e_1,\dotsc,e_r) \subseteq G$
be the set of graphs that include the edges $\{e_1,\dotsc,e_r\}\subseteq E$.
By definition, elements in $\BST(2n \times n,n)$ are in bijection with $G(\emptyset)$,
and \cref{CountingBSD} tells us that
\[
|\BSD(2n \times n,n)| = G \setminus \left( \bigcup_{r=1}^n \bigcup_{e_1,\dotsc, e_r \in E} G(e_1,\dotsc,e_r) \right).
\]
Using the inclusion-exclusion principle, it follows that
\[
|\BSD(2n \times n,n)| = |\BST(2n \times n,n)|-\sum_{e_1\in E} |G(e_1)| + \sum_{e_1,e_2\in E}  |G(e_1,e_2)|- \dotsb 
\]
We then observe that these graphs are characterized by 
the connected components induced by the forced edges $e_1,\dotsc,e_r$, determining a partition $p$ of $n$.
Furthermore, the sign in the above formula only depends on the number of forced edges, 
which is equal to $|p-1|$, so we can transform this into a sum over all partitions of $n$.
Given a partition $p \vdash n$, the number of graphs with component sizes $p_1,p_2,\dotsc,p_k$
is given by $\frac{1}{m!} \binom{|p|}{p}$, with $m$ given as above.

\noindent
\emph{Claim:} Let $e_1,e_2,\dotsc,e_r$ be fixed edges such that the component sizes are given by $p$.
Then 
\[
G(e_1,\dotsc,e_r) =  \binom{|p + 1|}{p + 1}.
\]
\emph{Proof:} 
Suppose $\Gamma_\sigma \in G(e_1,\dotsc,e_r)$ has a component $(i_1,i_2,\dotsc,i_j)$, in increasing order.
From \cref{Injection_to_S_m}, we know $\sigma^{-1}(i_s)<\sigma^{-1}(i_s+n)$ for 
all $1\leq s\leq j$, and for $(i_1,i_2,\dotsc,i_j)$ to be connected, 
we need $i_s +n$ to form an $n$-descent with $i_{s-1}$ for 
all $1<s\leq j$ \emph{i.e.} $\sigma^{-1}(i_s+n)=\sigma^{-1}(i_{s-1})-1$.

Together these two statements imply that $\sigma$, has the following structure:
\begin{align*}
 \sigma^{-1}(i_j)<\sigma^{-1}(i_j+n) \lessdot \sigma^{-1}(i_{j-1}) < \sigma^{-1}(i_{j-1}+n) \lessdot \dotsb \\
\dotsb  <\sigma^{-1}(i_{3}+n)\lessdot \sigma^{-1}(i_{2}) < \sigma^{-1}(i_{2}+n) \lessdot \sigma^{-1}(i_{1})<
\sigma^{-1}(i_{1}+n)
\end{align*}
where $a \lessdot b$ means that $a+1=b$. Thus, we have $j+1$ \emph{blocks}, 
\[
 [\sigma^{-1}(i_j)],\;  
 [\sigma^{-1}(i_{s+1}+n) \lessdot \sigma^{-1}(i_{s})] \text{ for $s=1,\dotsc,j-1$}
 \text{ and } [\sigma^{-1}(i_1+n)]
\]
which need to appear in order, but there is no further restriction.
The number of $\Gamma_\sigma$ with component 
sizes determined by $p$ is therefore $\binom{|p + 1|}{p + 1}$, which concludes the proof.
\end{proof}

Let us now dive into a completely different part of mathematics.
The \emph{Weil--Petersson volume}, $\Vol_{WP}(\cdot)$,
is defined as
\[
\Vol_{WP}(M) \coloneqq \frac{1}{(n-3)!}\int_{M} \wedge^{n-3}(\omega_{M}).
\]
where $\omega M$ is the Weil--Petersson symplectic form on $M$.
\medskip

Let $M_{0,n}$ be the moduli space of an $n$-punctured Riemann sphere, that is
\[
 M_{0,n} \coloneqq \{ (z_1,\dotsc,z_n) \in \hat{\setC}^n\ : z_i \neq z_j\} / \symS_n \times \mathrm{PSL}(2,\setC) 
\]
and $\symS_n$ acts by permuting variables, and $\mathrm{PSL}(2,\setC)$ acts as a linear fractional transformation.

\begin{theorem}[\cite{Zograf1993}]
The Weil--Petersson volume of the moduli space of an $n$-punctured Riemann sphere $M_{0,n}$ is given by the formula
\[
\Vol_{WP}(M_{0,n}) = \frac{\pi^{2(n-3)} v_{n}}{n!(n-3)!}, \text{ for $n\geq 4$},
\]
where the sequence $v_n$ for $n\geq 3$, $v_3=1$ be defined via the recursion
\begin{align}\label{eq:zograf}
v_n = \frac{1}{2}\sum_{i=1}^{n-3}
\frac{ i(n - i -2)}{n-1}
\binom{n - 4}{i - 1} \binom{n}{i + 1} v_{i +2} v_{n - i}, \qquad n\geq 4.
\end{align}
\end{theorem}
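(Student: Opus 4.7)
The plan: this is Zograf's classical theorem, whose proof uses complex and hyperbolic geometry that lies outside the combinatorial scope of the present paper. I would cite \cite{Zograf1993} rather than reprove it; for the record, the structure of Zograf's argument is as follows.

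First, pass to the Deligne--Mumford compactification $\overline{M}_{0,n}$, which has complex dimension $n-3$. By Wolpert's theorem, the Weil--Petersson K\"ahler class $[\omega_{WP}]/\pi^2$ extends to a rational cohomology class on $\overline{M}_{0,n}$ expressible through $\psi$-classes and boundary divisors (equivalently, via the tautological class $\kappa_1$). This reduces
\[
\Vol_{WP}(M_{0,n}) = \frac{1}{(n-3)!}\int_{\overline{M}_{0,n}} \omega_{WP}^{\wedge(n-3)}
\]
to a rational intersection number on $\overline{M}_{0,n}$, which accounts for the factor $\pi^{2(n-3)}$ and the denominator $(n-3)!$ in the formula; the additional $n!$ comes from dividing by the $\symS_n$-action in the definition of $M_{0,n}$.

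Second, I would package $\{v_n\}$ into a generating function $\Phi(x) \coloneqq \sum_{n\geq 3} v_n x^{n-1}/((n-1)!(n-3)!)$ and derive a non-linear ODE (Zograf's equation) for $\Phi$ by exploiting two geometric structures on $\overline{M}_{0,n}$: the forgetful morphism $\overline{M}_{0,n+1}\to\overline{M}_{0,n}$ and the product decomposition $D_{I|J} \cong \overline{M}_{0,|I|+1} \times \overline{M}_{0,|J|+1}$ of boundary divisors indexed by partitions $[n] = I\sqcup J$ with $|I|,|J|\geq 2$. Restricting $\omega_{WP}^{\wedge(n-3)}$ to such a divisor and applying K\"unneth produces the convolution $v_{i+2}v_{n-i}$, while the binomial factors $\binom{n-4}{i-1}$ and $\binom{n}{i+1}$ track how the $n-3$ copies of $\omega_{WP}$ and the $n$ marked labels distribute between the two components.

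The hardest step is pinning down the rational prefactor $i(n-i-2)/(n-1)$. It encodes the self-intersection of the section swept out by the forgotten marked point and is computed from Wolpert's identification of $\omega_{WP}/\pi^2$ with $\kappa_1$ together with the standard pull-back relations between $\kappa_1$ and the $\psi_i$ on $\overline{M}_{0,n}$. Since this argument belongs properly to the intersection theory of $\overline{M}_{0,n}$ rather than to the combinatorial framework developed here, my plan is ultimately to invoke Zograf's theorem as a black box and to use the recursion \eqref{eq:zograf} as the analytic input for the subsequent combinatorial comparison with $|\BSD(2n\times n, n)|$.
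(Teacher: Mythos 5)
Your plan to invoke Zograf's result as a black box is exactly what the paper does: the theorem is stated with the citation \cite{Zograf1993} and no proof is given, since it is an imported geometric input rather than part of the combinatorial development. Your sketch of the underlying argument (Wolpert's extension of the Weil--Petersson class, boundary divisor decompositions, and the forgetful morphism) is a reasonable bonus but is not required here.
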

This sequence shows up as \texttt{A115047} in the OEIS, \cite{OEIS},
see \cite{Kaufmann1996,Matone2001} for more background.

In \cite[Equation (0.7)]{Kaufmann1996}, the following relationship is shown
\begin{proposition}\label{prop:KaufmannFormula}
Let the sequence $v_n$ be defined as in \eqref{eq:zograf}.
Then 
\begin{equation*}
v_n = \sum_{k=1}^{n-3} \frac{(-1)^{n-3-k}}{k!} \!\!\!\!
\sum_{\substack{m_1,\dotsc,m_k > 0 \\ m_1+\dotsb+m_k=n-3 }} \binom{n-3}{m_1,\dotsc,m_k} \binom{n-3+k}{m_1+1,\dotsc,m_k+1}.
\end{equation*}
\end{proposition}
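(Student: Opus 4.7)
My plan is to reduce the identity to verifying that both sides satisfy the same recurrence with the same initial value. The statement is Equation (0.7) of \cite{Kaufmann1996}, so the shortest route within the paper is simply to cite that reference; the sketch below reconstructs the argument.

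Denote the right-hand side by $w_n$ and set $N := n-3$. I would first check the base case $w_4 = 1 = v_4$: only the summand with $k=1$, $m_1 = 1$ contributes to $w_4$, giving $\binom{1}{1}\binom{2}{2} = 1$, which matches the value produced by one step of \eqref{eq:zograf} applied to $v_3 = 1$. It then suffices to show that $w_n$ satisfies the Zograf recursion for all $n \geq 5$.

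For the inductive step, the plan is to pass to generating functions. The alternating double sum $\sum_{k=1}^N \frac{(-1)^{N-k}}{k!}\sum_{m_1+\dotsb+m_k = N}\binom{N}{m_1,\dotsc,m_k}\binom{N+k}{m_1+1,\dotsc,m_k+1}$ is exactly the shape produced by the exponential formula applied to the Bessel-type series
\[
B(x) := \sum_{m \geq 1}\frac{x^m}{m!\,(m+1)!} = \frac{I_1(2\sqrt{x})}{\sqrt{x}} - 1,
\]
so after the right normalization, the generating function of $(w_n)$ takes a closed form in terms of $B$. Separately, the Zograf recursion translates, after clearing the $1/(n-1)$ factor and rewriting $i(n-i-2)\binom{n-4}{i-1}\binom{n}{i+1}$ in terms of derivatives in $x$, into a first-order quadratic ODE for the exponential generating function of $(v_n)$. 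The identity $v_n = w_n$ then reduces to verifying that the closed form from the first step satisfies this ODE — essentially a modified Bessel identity.

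The main obstacle is choosing the correct generating function and normalization: the Zograf coefficients look inscrutable as written, but they collapse to clean derivatives once the appropriate rescaled series is used. Pinning down this rescaling is the technical heart of the proof and is the content of the calculations in \cite{Kaufmann1996,Matone2001}; once it is in place, the remaining verification is routine Bessel-function algebra.
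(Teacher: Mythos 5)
The paper gives no proof of this proposition at all --- it simply cites \cite[Equation (0.7)]{Kaufmann1996}, which is exactly the route you identify as the shortest one, so your proposal matches the paper's approach. Your supplementary generating-function sketch is consistent with how the cited reference proceeds, though be aware that the factor $(n-3+k)!$ in the second multinomial couples $k$ to $n-3$, so the exponential formula cannot be applied directly to your series $B(x)$; one first needs a Laplace/Borel-type transform to absorb that factorial before the modified-Bessel identities take over.
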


We are now ready to prove the following connection
between the sequence $v(n)$ and border-strip decompositions:
\begin{theorem}\label{thm:2nxn}
Let
\begin{equation}\label{eq:2nxnRecurrence}
a(n) = \frac{1}{2} \sum_{i=1}^n \frac{ i(n - i + 1)}{ (n + 2)} \binom{n - 1}{i - 1} \binom{n + 3}{i + 1} a(i - 1) a(n - i) .
\end{equation}
and let $v_n$ be given as in \eqref{eq:zograf}. Then $a(n) = v_{n+3} =  |\BSD(2n \times n ,n)|$.
\end{theorem}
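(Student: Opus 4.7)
The proof splits into two independent pieces: the combinatorial identity $|\BSD(2n\times n,n)|=v_{n+3}$, and the recursion $a(n)=v_{n+3}$. Of these, the second is essentially a re-indexing exercise in Zograf's recurrence, while the first is where the real content lies.

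For the recursion part, the plan is to substitute $n\mapsto n+3$ directly into \eqref{eq:zograf}. This gives
\[
v_{n+3}=\frac{1}{2}\sum_{i=1}^{n}\frac{i(n+1-i)}{n+2}\binom{n-1}{i-1}\binom{n+3}{i+1}v_{i+2}v_{n+3-i}.
\]
Setting $b(n):=v_{n+3}$ (so $b(i-1)=v_{i+2}$ and $b(n-i)=v_{n+3-i}$) converts this literally into \eqref{eq:2nxnRecurrence}. Since $a$ and $b$ satisfy the same recursion and agree on the base case (one checks $v_3=1=a(0)$ and a couple more small values to cover the empty sum convention), we obtain $a(n)=v_{n+3}$.

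The combinatorial identification $|\BSD(2n\times n,n)|=v_{n+3}$ proceeds by matching the inclusion--exclusion formula \eqref{eq:sumAsGraphs} with Kaufmann's formula from \cref{prop:KaufmannFormula}. Write Kaufmann's formula at $n+3$:
\[
v_{n+3}=\sum_{k=1}^{n}\frac{(-1)^{n-k}}{k!}\sum_{\substack{m_1,\dotsc,m_k>0\\ m_1+\dotsb+m_k=n}}\binom{n}{m_1,\dotsc,m_k}\binom{n+k}{m_1+1,\dotsc,m_k+1}.
\]
The ordered compositions $(m_1,\dotsc,m_k)$ group into unordered partitions $p\vdash n$ with $\ell(p)=k$, and each partition is realized by exactly $k!/\prod_i (m'_i!)$ orderings, where $m'_i$ is the multiplicity of $i$ in $p$. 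Moreover $|p+1|=|p|+\ell(p)=n+k$ and $|p-1|=n-k$, so the signed coefficient $(-1)^{n-k}/k!$ becomes precisely $(-1)^{|p-1|}/m!$ after reorganization. The multinomials $\binom{n}{m_1,\dotsc,m_k}$ and $\binom{n+k}{m_1+1,\dotsc,m_k+1}$ depend only on the multiset $\{m_i\}$, hence become $\binom{|p|}{p}$ and $\binom{|p+1|}{p+1}$. Collecting these identifications yields exactly \eqref{eq:sumAsGraphs}.

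The main obstacle, and the only place one must be a little careful, is bookkeeping the signs and the $1/m!$ versus $1/k!$ factors when passing from ordered compositions to unordered partitions; once those are aligned the two formulas are visibly the same. With both equalities established, the chain $a(n)=v_{n+3}=|\BSD(2n\times n,n)|$ follows.
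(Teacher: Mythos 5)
Your proposal is correct and follows essentially the same route as the paper: the identity $a(n)=v_{n+3}$ by re-indexing Zograf's recurrence \eqref{eq:zograf} at $n+3$, and the identity $v_{n+3}=|\BSD(2n\times n,n)|$ by converting the composition sum in \cref{prop:KaufmannFormula} into the partition sum of \eqref{eq:sumAsGraphs}. Your write-up actually supplies more of the bookkeeping (the $k!/\prod m'_i!$ orbit count, $|p\pm1|=n\pm k$, and the base-case check for the recursion) than the paper's own proof does.
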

\begin{proof}
The first equality, $v_{n+3} = a(n)$ follows from comparing \eqref{eq:zograf} and \eqref{eq:2nxnRecurrence}.
It is a straightforward calculation to verify that these are equal.

To get the second identity, note that we can get the formula in \cref{prop:KaufmannFormula}
from \cref{eq:sumAsGraphs} 
by replacing partitions with compositions, and then refining the sum over 
the number of parts (denoted $k$ in \cref{prop:KaufmannFormula}). 
\end{proof}

\section{Further directions}

Given the connection with Euler characteristics of 
moduli spaces mentioned after \cref{cor:totalNumberOfBorderStripDecomps},
and the connection with moduli spaces in \cref{thm:2nxn}, is there 
a generalization of this mysterious connection? For example, there are formula for the 
volumes of surfaces of other genus, see \cite{Matone2001}.


Another interesting direction is to consider the $q$-analogue of border-strip tableaux rather than decompositions.

\subsection*{Acknowledgements}

The authors are thankful for the observation made by Ziqing Xiang in \cite{ZiqingXiang294267}.
We also thank Richard Stanley for pointing out the reference \cite{Pak2000Ribbon},
and Justin Troyka for pointing out an error in an earlier draft. 

The first author is funded by the \emph{Knut and Alice Wallenberg Foundation} (2013.03.07).

\bibliographystyle{alpha}
\bibliography{bibliography}

\end{document}